\documentclass{article}
\usepackage{authblk}
\usepackage[margin=1in,footskip=0.33in]{geometry}
\usepackage{xr-hyper}
\usepackage{longtable,float,color,amsmath,amsxtra,amssymb,latexsym,amscd,amsthm,amsfonts,makecell,bbm}
\usepackage{hyperref}
\externaldocument{SuppMatInvaderDriven}
\usepackage{multirow, booktabs, makecell}
\allowdisplaybreaks
\usepackage{footnote}
\usepackage{tabularx,array}
\usepackage{natbib}

\usepackage{graphicx, wrapfig, caption, adjustbox, fancyhdr}
\captionsetup[figure]{font=small,skip=0pt}
\usepackage[dvipsnames]{xcolor}  
\usepackage{tikz}
\usetikzlibrary{positioning, shapes.multipart, shadows, calc}
\usepackage[all]{xy}
\usetikzlibrary{arrows.meta}
\usepackage[margin=1in]{geometry}

\newcolumntype{C}[1]{>{\centering\arraybackslash}m{#1}}

\newif\iffinalversion
\finalversiontrue 

\usepackage{changes, cancel, soul}
\usepackage{lineno}
\usepackage{xparse}

\iffinalversion
  \renewcommand{\added}[2][]{#2}   
  \renewcommand{\deleted}[2][]{}   
  \renewcommand{\replaced}[3][]{#2} 
  \renewcommand{\cancel}[1]{} 
  \NewDocumentEnvironment{change}{o}
    {\begingroup\IfValueTF{#1}{\color{#1}}{\color{black}}}
    {\endgroup}
\else
  \NewDocumentEnvironment{change}{o}
    {\begingroup\IfValueTF{#1}{\color{#1}}{\color{blue}}}
    {\endgroup}
\fi

\numberwithin{equation}{section}
\fancyhf{}
\theoremstyle{plain}
\newtheorem{thm}{Theorem} 

\newtheorem{defn}[thm]{Definition} 
\newtheorem{rmk}[thm]{Remark}
\newtheorem{lmm}[thm]{Lemma}
\newtheorem{prop}[thm]{Proposition}
\newtheorem{cor}[thm]{Corollary}
\fancyhead[R]{\footnotesize \textsc \leftmark}
\cfoot{\thepage}

\title{Understanding the invader-driven replicator dynamics}
\author[1]{Thi Minh Thao Le}
\author[2,1]{Marina Garcia-Romero}
\author[1]{João Duarte Alcântara Galvão}
\author[3]{Sten Madec}
\author[1] {Erida Gjini}
\affil[1]{Center for Computational and Stochastic Mathematics, Instituto Superior Técnico, Lisbon, Portugal}
\affil[2]{Universitat Politècnica de Catalunya, Barcelona, Spain}
\affil[3]{Institut Denis Poisson, University of Tours, Tours, France}
\date{}

\begin{document}

\maketitle
\begin{abstract}
In this paper, we study a special case of the invasion fitness matrix in a replicator equation: the invader-driven case. In this replicator, each species is defined by its unique active invasiveness potential (initial growth rate when rare), upon invading any other species, independently of the partner. We derive explicit expressions and theorems to fully characterize the steady-states of this system, including its unique interior coexistence regime, reached for positive species traits, or alternative boundary exclusion states, reached for negative species traits. We study the internal stability of coexistence steady-states, and the system’s stability to outsider invasion, relevant for system assembly. We provide detailed analytical results for critical diversity thresholds, and for the special case of random uniform species traits, we analytically compute the probability of stable $k-$ species coexistence in a random pool of size $N$, and show that the mean number of co-existing species can be approximated as $\mathbb{E}[n]\sim \sqrt{2N}$. We also derive explicit mathematical conditions for invader traits and invasion outcomes (augmentation, rejection, and replacement), dependent on the history of system assembly. Finally, by outlining links of this replicator case \added{with 3 systems: the old known competition model for self-reproducing macromolecules, }the corresponding (rank-1) Lotka-Volterra ecological systems, and certain epidemiological multi-strain SIS models with coinfection, we highlight the relevance of applying these mathematical principles to improve the theoretical and empirical understanding of multi-species coexistence \added{for ecology, epidemiology and biochemistry applications}. 
\end{abstract}
\section{Introduction}
The replicator dynamics has been extensively studied and used for its wide applicability in areas such as economics, ecology, epidemiology and microbiology \citep{Hofbauer_Sigmund_1998, cressman2014replicator, yoshino2008rank, chawanya2002large, gjini2020key, le2025inference,ferreira2025unpacking}. The replicator equation constitutes the foundation for modelling the interaction between species (also sometimes called `players' or ´strategies´ or `strains') for survival and dominance in a system, based on how fit they are relative to each other. In the particular instance of the replicator equation that we consider here, under classical frequency-dependence and linear fitness, the replicator equation is written in terms of an invasion fitness matrix (a special form of payoff), with invasion defined pairwise $\lambda_i ^j$, and the diagonal consisting of zeros \citep{madec2020predicting,le2023quasi}. It is described by the $N$-dimensional replicator system:
\begin{equation}\label{eq:replicator}
	\frac{dz_i}{dt }  = {\Theta z_i \cdot\bigg( \sum_{j\neq i} \lambda_i^j z_j -\mathop{\sum}_{1\leq k<j\leq N} (\lambda_j^k+\lambda_k^j) z_jz_k \bigg)},\quad i=1,\cdots,N
	\,,
\end{equation}
where $\lambda_i^j$'s denote pairwise invasion fitnesses between any two species \citep{geritz1998evolutionarily} and $\Theta$ gives the speed of the dynamics. In this equation the linear term inside the parenthesis $\sum_{j\neq i} \lambda_i^j z_j$ gives the mean invasion fitness of each species $i$ as a sum over all pairwise interactions, and the quadratic term $$
Q=\mathop{\sum}_{1\leq k<j\leq N} (\lambda_j^k+\lambda_k^j) z_j z_k
=
\sum_{i,j} \lambda_i^j z_iz_j=\sum_{i=1}^N z_i f_i(\mathbf{z})\equiv \bar{f}(\mathbf{z})
$$
describes the mean invasion fitness of the system, a quantity that depends on all species pairs, is dynamic in time, and acts equally on each species growth rate (see also \citep{gjini2023towards}).

In this paper, we focus on one special case of such a system, where each species always has the same invasion fitness when interacting with other system members, and so different species are defined by their unique invasiveness trait, independently of their partner, considered also in \citep{madec2020predicting, gjini2023towards}. What kind of multispecies coexistence results from such invasion fitness matrix? What are its defining principles and limitations? This is the question we address here. 

\added{
The mathematical formulation and dynamical behavior of replicator equations have been extensively studied, particularly in the context of pre‑biotic evolution and molecular self‑organization. Early work by \cite{epstein1979competitive} introduced a generalized class of replicator-type systems to model the competitive coexistence of self‑reproducing macromolecules. This line of research, related to the replicator equation studied here, was subsequently formalized and significantly extended by \cite{hofbauer1981competition,Hofbauer_Sigmund_1998}, who established the global dynamical properties of these systems, including the existence of Lyapunov functions, the monotonicity of key quantities, and the resulting global stability of equilibria under broad structural conditions.}

While many ecological system models are based on the Lotka-Volterra formalism, to study ecosystem stability, complexity, composition, and invasion, the related but simpler replicator equation framework has been less readily used. Here, motivated by our previous work \citep{madec2020predicting,gjini2020key,gjini2023towards,le2023quasi}, and the relative elegance of this formalism, we approach the same questions of ecosystem composition, stability and invasibility through the lens of the replicator equation, and for this particular replicator equation case. Community ecology theory explains biological invasions through modern niche concepts, particularly the notion of niche opportunity, which encompasses resource availability, enemy pressure, and environmental variability \citep{shea2002community}. Research efforts are long ongoing to provide a predictive framework for understanding invasion dynamics and the role of disturbances, interactions, or system diversity \citep{hui2019invade,seebens2025biological,kurkjian2021impact,gjini2023towards}. In this work, by the very nature of the replicator equation, written in terms of invasion fitnesses, the invasion resistance of the community, invader traits and performance can be directly inter-related, although we show this relation is rather intricate mathematically.

Thus, our study of this special case of the replicator equation has implications beyond mathematical results, for understanding ecosystem composition, stability, assembly and resistance to invasion. There is currently much attention on biological invasions as a leading cause of biodiversity loss, with significant ecological, societal, and economic consequences \citep{seebens2025biological}. Preventing the introduction and spread of alien species is the most effective means of mitigating these impacts. Here, we contribute to this challenge by elucidating the mathematical basis of successful or unsuccessful invasions in this type of replicator system, which can be key for designing interventions in natural systems. Our work provides clear mathematical criteria for different invasion outcomes, and for quantifying niche opportunities and system assembly dynamics, beyond pairwise mechanisms \citep{levine2017beyond}. 

The outline of the paper is as follows. In Section 2, we present detailed results on the steady-states of such a replicator system, focusing further on the only regime leading to coexistence, i.e. under positive invasion fitness traits. In Section 3, we address the number of coexisting species. We provide an expression for the probability of $k$-species coexistence under the special case of random uniform $\lambda_i$. For this trait distribution, we also show that the mean number of coexisting species can be approximated as $\sqrt{2N}$ where $N$ is the pool size. In Section 4, we study random system assembly and provide detailed conditions for new invasion outcomes, with a long history of ecological interest \citep{case1990invasion}, and prove special regimes of gradual niche saturation. In Section 5, we link the invader-driven replicator equation to real biological systems, providing an explicit bridge with Lotka-Volterra systems \citep{Hofbauer_Sigmund_1998}, \added{competitive coexistence of self-replicating macromolecules \citep{epstein1979competitive,hofbauer1981competition},} and coinfection SIS models with many strains \citep{madec2020predicting,le2023quasi}, including data from \textit{Streptococcus pneumoniae} serotype colonization \citep{le2025inference}. We believe we provide a full journey into the fascinating world of this replicator equation special case, that can be useful across many fields, from evolutionary game theory to biology and community ecology.

\section{Invader-driven fitness matrix and $n$-species steady state}

Let us consider the case when species are defined by their `invasiveness' property, i.e. their propensity for growth in a given resident system, is completely independent of the encountered resident. This means $\lambda_i^j=\lambda_i, \forall j\neq i$ and we say that the matrix $\Lambda=(\lambda_i^j)_{i,j}$ is invader-driven.
\\
We denote the simplex
$$\Delta=\{\mathbf{z}\in[0,1]^N,\; \sum_{i=1}^N z_i=1\}.$$
\added{Without loss of generality, we can set $\Theta = 1$ up to a time-scaling $t \mapsto \Theta t$.}
When $\Lambda$ matrix is invader-driven, the replicator equation on $\Delta$ reads:
\begin{equation} \label{eq:repli_invader}
        \dfrac{dz_i}{d\tau} = z_i\left(\lambda_i\left(1 - z_i\right) -Q\right), \qquad 1\leq i\leq N.
\end{equation}
with
$$
Q = \sum_{i<j}^N\left(\lambda_i^j + \lambda_j^i\right)z_jz_i=\sum_{i\neq j} \lambda_{i}^j z_iz_j = \sum_{i=1}^N \lambda_i\left(1- z_i\right)z_i.
$$
Solving the replicator equation  \ref{eq:repli_invader} for obtaining the equilibrium $\mathbf{z^*}=(z_1^*,z_2^*,\cdots,z_N^*)\in\Delta$, we obtain for all $i$:
        \begin{equation}
        \label{invader-driven}
        z_i^*=0 \qquad\text{ or }\quad
        z_i^*=1-\frac{Q^*}{\lambda_i}
        \end{equation}
where
$$
Q^*=\sum_{i=1}^N \lambda_i\left(1- z_i^*\right)z_i^*.
$$
Hence, if species $i$ is present at equilibrium, its steady state frequency satisfies  $0\leq 1-z^*_i=  \frac{Q^*}{\lambda_i}$ and this implies that all the $\lambda_i$ corresponding to the coexisting species must have the same sign as $Q^*$. Assume that there is $k$-coexisting species,summing up all the frequencies yields to
\begin{equation}
        Q^* = \dfrac{k-1}{\sum_{i=1}^k \frac{1}{\lambda_i}},
        \label{eq:Qstar}
\end{equation} with $k$ is the number of persisting species at steady state.
\replaced{Before presenting the main results, we provide an illustration of this type of replicator dynamics in Figure \ref{fig:dynamics}. }{ We have the following proposition.} 

\begin{figure}[htb!]
    \centering
    \includegraphics[width=1\linewidth]{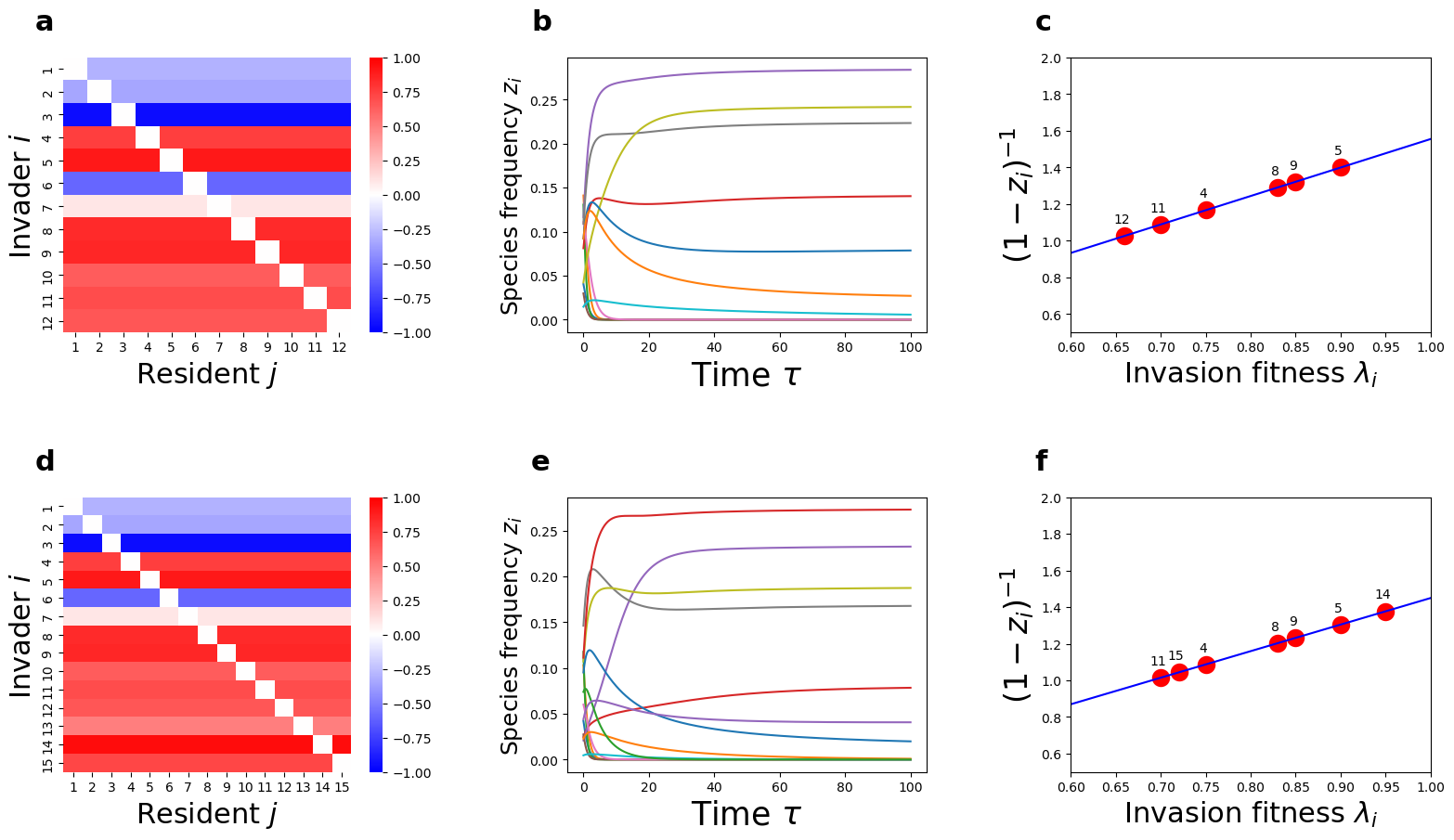}
    \caption{\textbf{Illustration of two cases of invader-driven replicator dynamics.} \added{On each row, we present a different system, simulating an invader-driven dynamics of 12 (a-c) and 15 (d-f) species, where system 2 (bottom row) is an augmented version of system 1 (top row), with the same 12 species as system 1, plus 3 additional species. Panels \textbf{(a)} and \textbf{(d)} Invasion fitness matrix $\Lambda = (\lambda_{i}^j)$, with $\lambda_{i}^j = \lambda_i\sim \mathcal{U}\left[-1,1\right]$ for $j\neq i$ and $\lambda^i_i = 0$ for all $i$. 
Panels \textbf{(b)} and \textbf{(e)} Replicator dynamics from an equal initial condition for all species, showing convergence to a coexistence equilibrium with a subset of \replaced{ species }{ speciess } persisting. 
Panels \textbf{(c)} and \textbf{(f)} Linearity between $\lambda_i$ and $\left(1 - z_i^*\right)^{-1}$ for all persistent species $i$. The surviving species are marked by red circles labeled via their index id. Note that the invasion fitness matrix in the second row extends the one in the first row; that is, the fitness values for species 1 through 12 are identical in both systems. Yet we see the set of coexisting species is different.}}
    \label{fig:dynamics}
\end{figure}

\subsection{Positive fitness wins}

We have the following proposition.

\begin{prop}\label{prop:positive}
Given a replicator system with invader-driven matrix $\Lambda$, denote the subset of species with positive invasion fitness by
$S^+ = \left\{ i \in \{1,\dots,N\} : \lambda_i > 0 \right\}$,
and the subset of species with non-positive invasion fitness by
$S^- = \left\{ i \in \{1,\dots,N\} : \lambda_i \leq 0 \right\}$.
Assume additionally that $S^+ \neq \emptyset$ and
$\sum_{i \in S^+} z_i(0) > 0$.
Then the set of all surviving species at an endemic steady state is a subset of $S^+$:
$$
        \forall i \in S^-:\quad \lim_{t\to+\infty} z_i(t) = 0.
$$
\end{prop}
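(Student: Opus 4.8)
The plan is to track, for each negative-fitness species that is present, its frequency relative to a fixed positive-fitness species, and to show this ratio decays to zero. First I would record the two standard facts about the flow (Eq.~\ref{eq:repli_invader}): the simplex $\{z_i\ge 0,\ \sum_i z_i=1\}$ and each of its faces are invariant, so a species with $z_i(0)=0$ stays extinct and trivially does not survive, whereas a species with $z_i(0)>0$ satisfies $z_i(\tau)>0$ for all $\tau$. By the hypothesis $\sum_{i\in S^+}z_i(0)>0$ I can fix one index $i^{*}\in S^+$ with $z_{i^{*}}(0)>0$; it then suffices to show $z_j(\tau)\to 0$ for every $j\in S^-$ with $z_j(0)>0$.

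The key computation is that along the flow $\frac{d}{d\tau}\log z_i=\lambda_i(1-z_i)-Q$, so the mean-fitness term $Q$ cancels in the log-ratio:
\begin{equation}
\frac{d}{d\tau}\log\frac{z_j}{z_{i^{*}}}=\lambda_j(1-z_j)-\lambda_{i^{*}}(1-z_{i^{*}}).
\end{equation}
Because $\lambda_{i^{*}}\ge 0$, $\lambda_j<0$, and $0\le z_k\le 1$, the right-hand side is $\le 0$, so the ratio $z_j/z_{i^{*}}$ is non-increasing and therefore bounded above by $R:=z_j(0)/z_{i^{*}}(0)$.

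The delicate point, and the step I expect to be the main obstacle, is that monotonicity alone does not force the ratio to zero: its log-derivative could a priori tend to $0$ in the limit, which is precisely what would happen if the negative species took over, i.e. $z_j\to 1$. I would remove this possibility by a short bootstrap that exploits the simplex constraint. From $z_j\le R\,z_{i^{*}}$ together with $z_{i^{*}}\le 1-z_j$ (since $i^{*}\ne j$ and $z_{i^{*}}+z_j\le 1$) one gets $z_j\le R(1-z_j)$, hence $z_j\le \beta:=R/(1+R)<1$ uniformly in $\tau$. Feeding this back into the log-ratio identity yields $\frac{d}{d\tau}\log(z_j/z_{i^{*}})\le \lambda_j(1-z_j)\le -|\lambda_j|(1-\beta)<0$, a strictly negative constant. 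Integrating gives $z_j/z_{i^{*}}\le R\,e^{-|\lambda_j|(1-\beta)\tau}\to 0$, and since $z_{i^{*}}\le 1$ this forces $z_j\to 0$.

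Finally I would assemble the conclusion: every negative-fitness species with positive initial frequency tends to extinction, while those with zero initial frequency are extinct throughout, so the support of any endemic steady state reached is contained in $S^+$. I would emphasize that the argument is uniform in the internal sign structure of $S^+$ (it uses only $\lambda_{i^{*}}\ge 0$), so it also covers the borderline case $\lambda_{i^{*}}=0$ with no separate treatment; the only two inputs are the cancellation of $Q$ in the log-ratio and the simplex bound keeping $z_j$ bounded away from $1$.
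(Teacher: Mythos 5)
Your proof is correct, and it takes a genuinely different route from the paper. The paper works with aggregate quantities: it sets $V_1(z)=1-\sum_{i\in S^+}z_i$ and $V_2(z)=\sum_{i\in S^-}z_i$, shows $V_1'\le 0$ by splitting $Q$ into its $S^+$ and $S^-$ contributions, rules out the absorbing state $\sum_{i\in S^-}z_i=1$ using the hypothesis on the initial condition, and then invokes LaSalle's invariance principle. You instead argue species-by-species: the cancellation of $Q$ in $\frac{d}{d\tau}\log\bigl(z_j/z_{i^*}\bigr)=\lambda_j(1-z_j)-\lambda_{i^*}(1-z_{i^*})$ gives monotonicity of each ratio, and your bootstrap $z_j\le R\,z_{i^*}\le R(1-z_j)\Rightarrow z_j\le R/(1+R)<1$ is exactly the right device to keep $1-z_j$ bounded below and convert monotonicity into a strictly negative drift, hence exponential decay $z_j\le R\,e^{-|\lambda_j|(1-\beta)\tau}$. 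Each step checks out, including the observation that only $\lambda_{i^*}\ge 0$ is used, so the borderline case $\lambda_{i^*}=0$ (where, without the bootstrap, the log-derivative genuinely could vanish in the limit) needs no separate treatment. What the two approaches buy is different: yours is more elementary (no LaSalle, no analysis of the set $\{\dot V=0\}$) and quantitative, giving an explicit exponential extinction rate for every negative-fitness species; the paper's Lyapunov/LaSalle argument is qualitative but yields more as a byproduct, namely convergence of the whole trajectory to an equilibrium, which is what the remark immediately following the proposition relies on and which your argument does not by itself provide.
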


\begin{proof}
Let $\mathbf{z}(t) \in \Delta$ be a solution of \eqref{eq:repli_invader}.Consider the function
$V(\mathbf{z}) = \sum_{i \in S^-} z_i$.        Clearly $V(\mathbf{z}(t)) \in [0,1]$ for all $t$. Note that
\begin{equation}\label{V=0or1}
V = 0 \;\Longleftrightarrow\; z_i = 0 \ \forall i \in S^-,
\quad\text{and}\quad
V = 1 \;\Longleftrightarrow\; z_i = 0 \ \forall i \in S^+.
\end{equation}
Therefore, it suffices to show that $V(\mathbf{z}(t)) \to 0$ as $t \to +\infty$. Using the identity
$$
Q = \sum_{i \in S^+} \lambda_i z_i(1 - z_i)
+ \sum_{i \in S^-} \lambda_i z_i(1 - z_i),
$$
we obtain
\begin{equation*}
\begin{aligned}
\frac{d}{dt} V(\mathbf{z}(t))
&
\,= \,
\sum_{i \in S^-} \frac{d}{dt} z_i
\,= \, 
\sum_{i \in S^-} z_i\bigl(\lambda_i (1 - z_i) - Q\bigr) 
\,=\, 
\sum_{i \in S^-} \lambda_i z_i (1 - z_i) - Q V(\mathbf{z}) \\
&
\,= \,
\biggl(\sum_{i \in S^-} \lambda_i z_i (1 - z_i)\biggr)\bigl(1 - V(\mathbf{z})\bigr)
- \biggl(\sum_{i \in S^+} \lambda_i z_i (1 - z_i)\biggr) V(\mathbf{z}) 
\qquad\le 0
\end{aligned}
\end{equation*}
Both term in this equation is non-positive so, by \eqref{V=0or1}, we have $\frac{d}{dt} V(\mathbf{z}(t)) = 0$ if and only if $V = 0$ or $V = 1$.

By LaSalle’s invariance principle applied to the Lyapunov function $V(\mathbf{z})$, the $\omega$-limit set of $\mathbf{z}(t)$ is contained in the largest invariant subset of $\{ \mathbf{z} : \dot V(\mathbf{z}) = 0 \}$, which consists of states with $V = 0$ or $V = 1$. Hence either $V(\mathbf{z}(t)) \to 0$ or $V(\mathbf{z}(t)) \to 1$.

Since $t \mapsto V(\mathbf{z}(t))$ is non-increasing and $V(\mathbf{z}(0)) = 1 - \sum_{i \in S^+} z_i(0) < 1$
by assumption, we have $V(\mathbf{z}(t)) < 1$ for all $t \ge 0$. Therefore the limit $V(\mathbf{z}(t))$ as $t \to +\infty$ cannot be $1$, and we conclude that
     \[
        V(\mathbf{z}(t)) \to 0 \quad \text{as } t \to +\infty.
        \]
This implies $z_i(t) \to 0$ for all $i \in S^-$, which completes the proof.
\end{proof}

\begin{rmk}
\deleted{According to the proof of Proposition \ref{prop:positive}, if $S^+$ is not empty, the invader-driven dynamics converges to an equilibrium.}
\end{rmk}
Considering that when $\lambda_i$ include both negative and positive entries, the selection proceed to keep only the positive-fitness species, we  from now on focus on the case, when the invasion traits are all positive, or all negative. The $\lambda_i-$ positive scenario is the only scenario that can yield coexistence of multiple species, that is why we present all the analysis of this case in the main text. The $\lambda_i$-negative case, which leads always to competitive exclusion, is detailed with formal proofs in the supplements (See Section \ref{supp:negative} and Figure \ref{fig:negative}).

\subsection{Identities of coexisting species and global convergence to equilibrium} \label{sec:main-result-dynamics}
\begin{rmk}
By the Proposition \ref{prop:positive}, from now on, if there is \replaced{no }{not} further assumption, without loss of generality, \replaced{we }{to } consider the coexistence in the case of invader-driven matrix with at least one positive fitness. It suffices to consider the case when all $\lambda_{i} >0$ for all $i$ and we can rearrange the name/order of \replaced{ species }{ speciess } such that $$\lambda_1 \geq \lambda_2 \geq \dots \geq\lambda_N.$$ 
\end{rmk}

\deleted{We set $E_k { = \left\{1,2,\dots,k\right\} }\subset \left\{1,2,\dots,N\right\}$, $k \leq N$ be the set of \replaced{ species }{ speciess } coexistence in the long run. According to the steady state frequency expression, it is trivial to see that, \added{at an equilibrium}, the hierarchy of relative abundances follows strictly the hierarchy of species invasion fitnesses:}
\begin{change}
Next, we prove global attraction of the unique equilibrium.
\begin{thm} \label{thm:main-converge}
The invader-driven dynamics converges to the stable equilibrium $z^*$ for all \added{strictly positive} initial states. Furthermore, $z^* = \left(z^*_1,z_2^*,\dots, z^*_k,0,\dots,0\right)$ satisfies $\lambda_{k+1} < Q^*_k < \lambda_k$ and $z^*_i = 1 -\frac{Q_k^*}{\lambda_i}$, for all $1 \leq i \leq k$, where $Q^*_k = \frac{k-1}{\sum_{i=1}^k \frac{1}{\lambda_i}}$.
\end{thm}

\begin{proof}

We define the potential
\begin{equation*}
V\left(z\right) := \sum_{i=1}^{N} \left(\lambda_iz_i - \frac{\lambda_i}{2}z_i^2\right)
\quad
\Longrightarrow \,f_i\left(z\right):=\frac{\partial V}{\partial z_i} = \lambda_i\left(1-z_i\right),\,\forall 1\leq i\leq N\,.
\end{equation*}
According to \cite[Theorem 19.5.1]{Hofbauer_Sigmund_1998}, \eqref{eq:repli_invader} are Shahshahani gradients on the simplex $\Delta = \{z\in \mathbb{R}^N:\, z_1 + \dots + z_N = 1\}$ with the potential $V$. Then, by \cite[Exercise 19.5.3]{Hofbauer_Sigmund_1998},
the potential $V$ is strictly concave on the simplex $\Delta$
and hence,
\eqref{eq:repli_invader} converges to the unique, globally attracting interior rest point $z^*$ (i.e. $z^*$ does not lie on the boundary of $\Delta$).

Now, it suffices to determine $z^*$ which maximizes $V$ on $\sum_{i=1}^N z_i = 1, \ z_i \ge 0$ for all $1\leq i\leq N$. For this purpose, we consider the Lagrangian
$$\mathcal{L}\left(z,\nu,\mu\right) = \sum_{i=1}^{N} \left(\lambda_iz_i - \frac{\lambda_i}{2}z_i^2\right)
-
\nu\left(\sum_{j=1}^{N}z_i - 1\right)
+ \sum_{i=1}^N \mu_i z_i
\,.$$
The KKT conditions give us
$$
\lambda_i(1 - z_i) = \nu - \mu_i, \qquad \mu_i \geq 0, \qquad \mu_i\, z_i = 0,\quad \forall 1\leq i\leq N.
$$
If $\nu < \lambda_i$, $z_i - \frac{\mu_i}{\lambda_i} = 1 - \frac{\nu}{\lambda_i} \geq 0$.
Thus, if $z_i = 0$, $-\frac{\mu_i}{\lambda_i} > 0$ i.e. $\mu_i < 0$, which is absurd. This leads to $z_i > 0$, implying $\mu_i = 0$ then $z_i=1 -\frac{\nu}{\lambda_i}$.\\
If $\nu \geq \lambda_i$, $z_i - \frac{\mu_i}{\lambda_i} = 1 - \frac{\nu}{\lambda_i} \leq 0$, yielding to $z_i \leq \frac{\mu_i}{\lambda_i}$. Since $\lambda_i, z_i, \mu_i$ are non-negative and $\mu_iz_i =0$, then $z_i = 0$.
\\
Hence, for all $1\leq i\leq N$, we have that
$$
z_i = \max\left\{ 1 - \frac{\nu}{\lambda_i}, 0 \right\}, \quad \sum_{i=1}^N z_i = 1.
$$
This implies $\nu \geq 0$ since $0 \leq z_i \leq 1$ for all $1\leq i \leq N$. Trivially, $\nu \neq 0$, because, if $\nu = 0$ then $z_i = 1$ for all $1\leq i \leq N$, which is absurd, due to $N\geq 2$.

With the order $\lambda_1 \ge \cdots \ge \lambda_N$, according to the uniqueness of interior rest point $z^*$, the unique $\nu$ satisfying the constraint lies strictly between $\lambda_{k+1}$ and $\lambda_k$,
and equals $\frac{k-1}{\sum_{i=1}^k \frac{1}{\lambda_i}}$. This yields exactly $z^*= \left(z^*_1,z_2^*,\dots, z^*_k,0,\dots,0\right)$ with $z^*_i = 1 -\frac{Q_k^*}{\lambda_i}$, for all $1 \leq i \leq k$ and $Q^*_k = \frac{k-1}{\sum_{i=1}^k \frac{1}{\lambda_i}}$. Strict concavity leads to the uniqueness, i.e.
$V(z) \le V(z^*)$ with equality if and only if $z = z^*$.
\\
Secondly, differentiate $V$ along trajectories (recalling that $\sum_{i=1}^N z_i = 1$ and $Q(z) = \sum_{i=1}^N z_i f_i(z)$), we have that
$$
\begin{aligned}
\frac{d}{dt}{V}(z) 
= 
\sum_{i=1}^N \frac{\partial V}{\partial z_i} \frac{d}{dt}{z}_i
= \sum_{i=1}^N f_i(z) z_i \left(f_i(z) - Q(z)\right)
&
= 
\sum_{i=1}^N z_i \left(f_i^2(z) - 2f_i(z)Q(z)   \right)
+ Q(z)\sum_{i=1}^N z_i f_i(z)
\\
&
=
\sum_{i=1}^N  z_i \left(f_i(z) - Q(z)\right)^2
\geq 0.
\end{aligned}
$$
We have $\frac{d}{dt}{V}(z) = 0$ if and only if $f_i(z) = Q(z)$ for all indices $i$ satisfying $z_i > 0$.
Since $f_i(z) = \lambda_i (1 - z_i)$, this gives
$$
z_i = 1 - \frac{Q(z)}{\lambda_i} \quad \text{whenever} \quad z_i > 0,
$$
and $z_i = 0$ otherwise. This is the same threshold form as before with
$\nu = Q(z)$. 

Therefore, \added{$V(z) \leq V(z^*)$ for all $z \neq z^*$ and $\frac{d}{dt}{V} \geq 0$}, $
\frac{d}{dt}{V} = 0$ if and only if $z = z^*$, then $z^*$ is the globally attracting rest point of the invader-driven dynamics with strictly positive fitnesses.

\end{proof}

Prior to the next result, we recall that a monomorphic state of a dynamical system is a fixed point located at a vertex of the standard simplex, defined by the state vector $x = e_k$, where the $k$-th component equals one and all other components equal zero.
\begin{prop} \label{prop:1unstable}
Every monomorphic (competitive-exclusion) equilibrium is unstable.
\end{prop}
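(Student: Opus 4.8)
The plan is to identify the competitive-exclusion equilibria as the vertices of the simplex and to show that at any such vertex every absent species has strictly positive invasion fitness, so a rare-invader perturbation grows. First I would record what a boundary equilibrium is in this setting: since only one species persists and $\sum_i z_i=1$, it must be a vertex $z^*=e_m$, i.e.\ $z_m^*=1$ and $z_j^*=0$ for $j\neq m$ (the text singles out $m=1$ via Proposition~\ref{prop:chain}, but the argument below applies verbatim to every vertex $m$). The key elementary observation is that the mean invasion fitness vanishes at any monomorphic state: $Q^*=\sum_{i=1}^N \lambda_i(1-z_i^*)z_i^*=\lambda_m(1-1)\cdot 1=0$, and one checks directly from \eqref{eq:repli_invader} that $e_m$ is an equilibrium.

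Next I would compute the invasion fitness of an absent species. For any $j\neq m$, the per-capita growth rate near $e_m$ is $\lambda_j(1-z_j)-Q\to \lambda_j-Q^*=\lambda_j>0$, so species $j$ grows when rare and $e_m$ cannot be stable. To phrase this as a spectral statement, I would evaluate the Jacobian \eqref{eq:Jdiag}--\eqref{eq:Jdiagoff} at $z^*=e_m$: the diagonal entries are $-2\lambda_m z_m^*(1-z_m^*)=0$ for $i=m$ and $\lambda_i(1-0)-Q^*=\lambda_i>0$ for every $i\neq m$, while the off-diagonal entries $-\lambda_j z_i^*(1-2z_j^*)$ vanish for all $i\neq m$ (since $z_i^*=0$) and equal $-\lambda_j$ in row $m$. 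Reordering with $m$ placed last makes the Jacobian triangular, so its spectrum is exactly $\{0\}\cup\{\lambda_i:i\neq m\}$; this contains $N-1$ strictly positive eigenvalues, giving linear instability. This is the same transverse-eigenvalue mechanism already used in Proposition~\ref{prop:chain}, specialised to $Q^*=0$.

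The one point requiring care — and the only real obstacle — is to make sure the instability lives inside the invariant simplex rather than in the spurious total-mass direction. The zero eigenvalue has eigenvector $e_m$, which satisfies $\sum_i v_i=1\neq 0$ and hence points off the simplex, so it is dynamically irrelevant; each positive eigenvalue $\lambda_j$ has eigenvector proportional to $e_j-e_m$, which satisfies $\sum_i v_i=0$ and is therefore tangent to $\{\sum_i z_i=1\}$, corresponding to the legitimate on-simplex perturbation ``replace a little of species $m$ by species $j$.'' For this reason I would present the rare-invader growth-rate argument as primary, since it produces a simplex-tangent perturbation automatically and sidesteps both the constraint bookkeeping and any complication from repeated $\lambda$-values; the triangular-Jacobian computation then serves as a clean confirmation. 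This shows every vertex $e_m$ is unstable, which is Proposition~\ref{prop:1unstable}.
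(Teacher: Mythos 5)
Your proof is correct and follows essentially the same route as the paper: linearize at the boundary vertex, observe that the Jacobian is triangular with transverse eigenvalues $\lambda_j>0$ for every absent species $j$, and conclude the equilibrium is an unstable saddle. Your extra verification that the unstable eigenvectors $e_j-e_m$ satisfy $\sum_i v_i=0$ and hence are tangent to the simplex (while the zero eigenvalue's eigenvector $e_m$ points off it) is a worthwhile refinement that the paper leaves implicit, but it does not change the underlying argument.
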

This remark is a trivial consequence of Theorem \ref{thm:main-converge}, since $Q^*_1(z) = 0$ and $\lambda_i > 0 $ for all $1\leq i\leq N$. It can be also deduced by computing directly the Jacobian at $e_1 = (1,0,\dots,0)$.
\\ \\
The following Figure \ref{fig:pattern_lambdas_schema} illustrates a simple algorithm to determine the surviving species, by computing $Q^*_i$, starting $i=1,2,\dots$ and compare if $Q^*_i \in \left(\lambda_{i+1}, \lambda_i\right)$.
\begin{figure}[htb!]
    \centering
\begin{tikzpicture}
  \draw[very thick] (0,0) -- (12,0); 
  \foreach \x/\label in {0.0/\textbf{0}, 1/\textbf{1}} {
    \pgfmathsetmacro{\xpos}{12*\x}
    \draw[very thick] (\xpos,0.15) -- (\xpos,-0.15);
    \node[below] at (\xpos,-0.15) {\label};}

  \foreach \x/\label in {0.07/$\boldsymbol{Q_2^*}$, 0.22/$\boldsymbol{Q_3^*}$, 0.40/$\boldsymbol{Q_{k-1}^*}$,  0.52/$\boldsymbol{Q_k^*}$} 
  {
    \pgfmathsetmacro{\xpos}{10*\x}
    \draw[very thick, red] (\xpos,0.15) -- (\xpos,-0.15);
    \node[below, red] at (\xpos,-0.15) {\label};
  }

  \foreach \x/\label in {0.63/$\boldsymbol{\lambda_k}$, 0.72/$\boldsymbol{\lambda_{k-1}}$, 0.93/$\boldsymbol{\lambda_3}$, 0.99/$\boldsymbol{\lambda_2}$, 1.13/$\boldsymbol{\lambda_1}$} {
    \pgfmathsetmacro{\xpos}{10*\x}
    \draw[very thick, OliveGreen] (\xpos,0.15) -- (\xpos,-0.15);
    \node[below, OliveGreen] at (\xpos,-0.15) {\label};
  }
  \node at (3.05,-0.5) {\textcolor{red}{\textbf{\dots}}}; 
   \node at (8.3,-0.5) {\textcolor{OliveGreen}{\textbf{\dots}}};
  \draw[->, very thick, color=red] (1.7,-1.2) -- (4.2,-1.2);
  \draw[->, very thick, color=OliveGreen] (9.8,-1.2) -- (7.3,-1.2);  
\end{tikzpicture}
\caption{\textbf{Invader-driven system assembly.} Diagram of the ecological assembly mechanism of $k$ species in an invader-driven system, with $0<\lambda_i<\lambda_{i-1}<\dots<\lambda_1$ and $i=1,\cdots,N$. As more species are added to the system, the mean invasion resistance $Q^*$ increases and a natural threshold emerges below which less fit species cannot coexist anymore in the invader-driven replicator.}
\label{fig:pattern_lambdas_schema}
\end{figure}
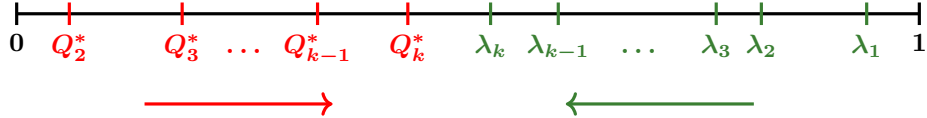

This algorithm is based on the following result.
\begin{prop} \label{rmk:prop-k}
Assume \eqref{eq:repli_invader} has $k$ species surviving in the long run and denote $Q^*_i = \frac{i-1}{\sum_{j=1}^i\frac{1}{\lambda_j}}$ for all $1\leq i\leq N$. Then $k$ is the first index satisfying the inequality $\lambda_{i+1} < Q_i^*$ and also is the unique index satisfying $\lambda_{i+1} < Q^*_i < \lambda_i$.
\end{prop}
\begin{proof}
According to Theorem \ref{thm:main-converge}, it suffices to prove that there exists unique $k$ such that $\lambda_{k+1} < Q^*_k < \lambda_k$. \\ \\
$\bullet$ \textbf{Existence.}
By the decreasing arrangement of $\lambda_{i}$, we can set a pseudo-$\lambda_{N+1}$ which equals $0$ and define the sequence $D_i:= Q_i^* - \lambda_{i+1}$ for all $1\leq i\leq N$. Then $D_1 = Q^*_1 - \lambda_2 = -\lambda_2 < 0$ and $D_N = Q^*_N - \lambda_{N+1} = Q^*_N > 0$. Thus, $\left\{D_i\right\}_{i\geq 1}$ starts \deleted{with} negative and ends \deleted{with} positive. Hence, we can define $D_{k_0}$ be the first non-negative term in the sequence, which \deleted{implies } \added{means that $D_{k_0} > 0$ and $D_{k_0-1} < 0$. By the definition of $D_i$, this leads to $Q^*_{k_0} > \lambda_{k_0 + 1}$ and $Q^*_{k_0-1} < \lambda_{k_0}$.}
\begin{equation} \label{eq:existence-system}
\cancel{\left\{
\begin{aligned}
& D_{k_0} > 0 ,
&&\Longrightarrow
Q^*_{k_0} > \lambda_{k_0 + 1}\\
& D_{k_0-1} < 0 ,
&&\Longrightarrow
Q^*_{k_0-1} < \lambda_{k_0}.
\end{aligned}
\right.}
\end{equation}
It suffices to prove $Q^*_{k_0} < \lambda_{k_0}$. Indeed, setting
\begin{equation}
S_{k} = \sum_{j=1}^{k}\frac{1}{\lambda_j}
\Longrightarrow
Q^*_{k_0 - 1} = \frac{k_0 - 2}{S_{k_0-1}}
\quad
\text{and}\;\;
Q^*_{k_0} = \frac{k_0 - 1}{S_{k_0-1} + \frac{1}{\lambda_{k_0}}}\,.
\end{equation}
This leads to
\begin{equation*}
Q^*_{k_0-1} < \lambda_{k_0}
\Longleftrightarrow
\frac{k_0 - 2}{S_{k_0-1}} < \lambda_{k_0}
\Longleftrightarrow
S_{k_0-1} > \frac{k_0 -2}{\lambda_{k_0}}\,.
\end{equation*}
Thus, we obtain that
\begin{equation*}
Q^*_{k_0} = \frac{k_0 - 1}{S_{k_0-1} + \frac{1}{\lambda_{k_0}}}
<
\frac{k_0-1}{\frac{k_0-2}{\lambda_{k_0}} + \frac{1}{\lambda_{k_0}}} = \lambda_{k_0}.
\end{equation*}
Hence, we have that $\lambda_{k_0+1} < Q^*_{k_0} < \lambda_{k_0}$. 
\\ \\
$\bullet$ \textbf{Uniqueness:} It suffices to prove that once $D_k$ turns positive, it stays positive. Indeed, assume $D_k > 0$ for some $1\leq k\leq N$, which means $Q^*_k > \lambda_{k+1}$. Since $Q^*_k = \frac{k-1}{S_k}$, we deduce $S_k < \frac{k-1}{\lambda_{k+1}}$. We consider
\begin{equation*}
Q^*_{k+1} = \frac{k}{S_k + \frac{1}{\lambda_{k+1}}}
>
\frac{k}{\frac{k-1}{\lambda_{k+1}} + \frac{1}{\lambda_{k+1}}}
=\lambda_{k+1}\,,
\end{equation*}
yielding to $Q^*_{k+1} > \lambda_{k+1} \geq \lambda_{k+2}$. Thus $D_{k+1} = Q^*_{k+1} - \lambda_{k+2} > 0$. By induction arguments, once the sequence becomes positive it never returns to non-positive values, and moreover $Q^*_{i} > \lambda_i$, for all $i > k$.
\end{proof}
\end{change}
In Figure \ref{fig:coexSS}
we show an illustration of this uniqueness property of the stable steady state, from different random simulations of invader-driven replicator dynamics. 

\begin{figure}[t!]
    \centering
    \includegraphics[width=0.95\linewidth]{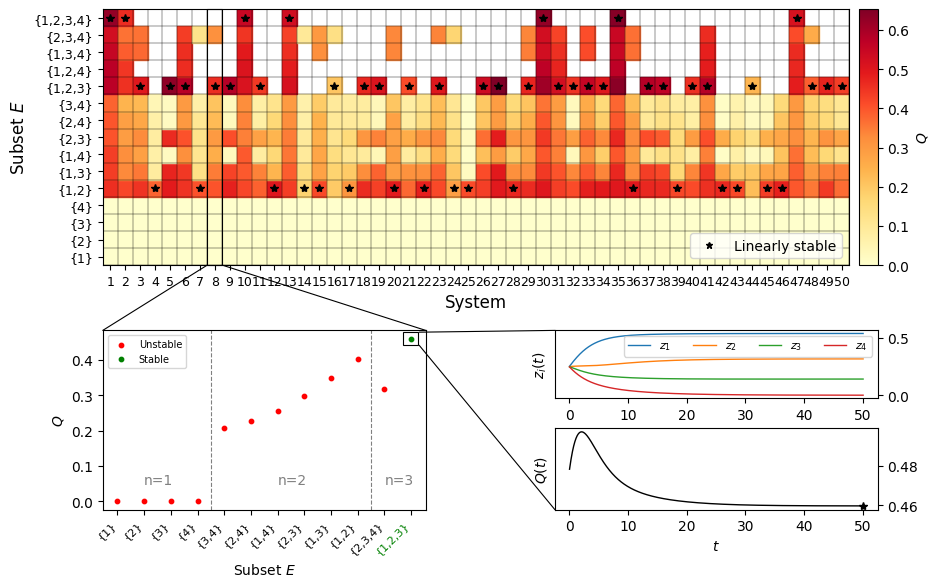}
    \caption{\textbf{Coexistence subsets $E$ and global mean fitness $Q$ at invader-driven replicator equilibria.} We examine all the feasible steady-states of $50$ different invader-driven systems with $\lambda_i\sim \mathcal{U}\,[0,1]$ $\forall\,i\in S$. The top heatmap shows $Q$ for all the equilibria subsets $E$ (rows) in each system (columns), colored by magnitude and marked with a black star if the equilibrium is linearly stable (computed numerically). Subsets are ordered from bottom to top by increasing $k$. The bottom panels show, for one selected system (highlighted in the grid) with $\lambda_1 = 1 \ ,\ \lambda_2 = 0.674  \ ,\ \lambda_3 = 0.536\ ,\ \lambda_4= 0.342$, on the left a scatter plot of $Q$ for the different equilibria of the system, and on the right the time evolution of the frequencies $z_i(t)$, $i\in S$ and the mean fitness $Q(t)$. In the scatter plot, the unstable equilibria are red dots, and the stable one, which corresponds with the maximum value of $Q$ within the system, is green. Moreover, the green label $\{1,\,2,\,3\}$ indicates the result from the particular dynamics simulation.}
    \label{fig:coexSS}
\end{figure}

\section{Random uniform $\lambda_i$ and probability of $k$ species coexistence}

Using system simulations for different pool size $N$ and random realizations of the $\lambda$ vector, we can see that stable coexistence of a large number of species is not easy in this replicator dynamics. Numerically we can obtain the probability distribution of the number of coexisting species (Figure \ref{fig:ndistpanel}), but the challenge is: can we obtain it formally?

\begin{figure}[hbt!]
    \centering
    \includegraphics[width=0.8\linewidth]{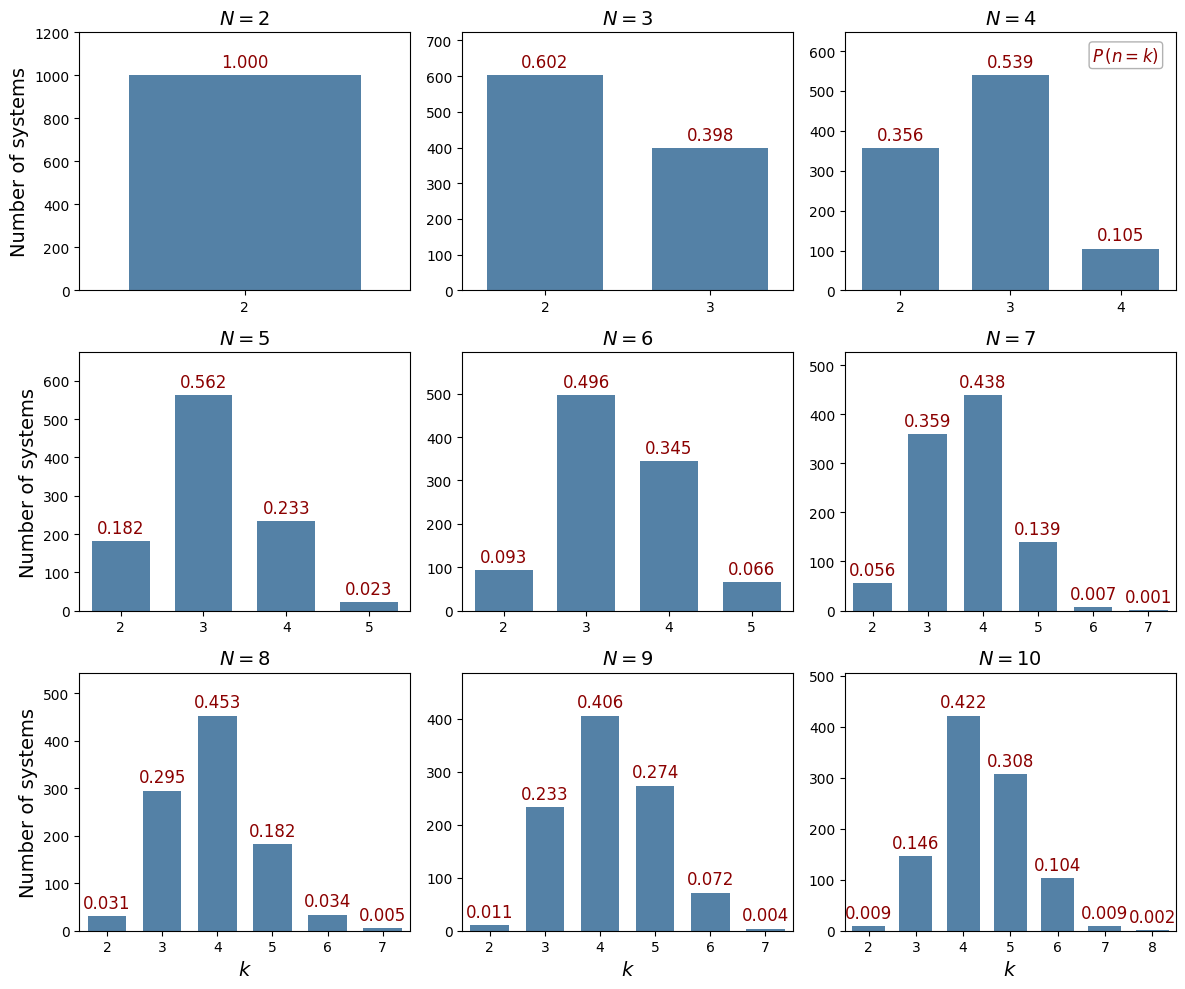}
    \caption{\textbf{Distribution of the number of coexisting species in the invader-driven replicator, for different species pool size $N$.} Summary of 1000 simulations for each $N$ between 2 and 10.}
    \label{fig:ndistpanel}
\end{figure}
\added{Initially, we recall that, order statistics are the sample values of a random variable, arranged in ascending order from smallest to largest.}
\\
Now, let $\lambda_{1} \ge \cdots \ge \lambda_{N}$ be the order statistics of i.i.d.\ $\lambda_i \sim \mathcal{U}[0,1]$.
Recalling to Proposition \ref{prop:1unstable}, for each $k \ge 2$, we recall the notation $Q^*_k = \frac{k-1}{\sum_{j=1}^k \frac{1}{\lambda_{j}}}$.
By Theorem \ref{thm:main-converge} , the number of surviving species, denoted by $n$ is \added{characterized by}
$$
m = k \quad \Longleftrightarrow \quad \lambda_{k+1} < Q^*_k < \lambda_{k}.
$$
We consider that
\begin{equation} \label{eq:prob-cond1}
Q_k^* < \lambda_k \quad \Longleftrightarrow
\quad
\sum_{j=1}^{k}\frac{1}{\lambda_j} > \frac{k-1}{\lambda_k}
\quad
\Longleftrightarrow
\quad
\lambda_k \sum_{j=1}^{k-1}\frac{1}{\lambda_j} > k-2\,,
\end{equation}
and
\begin{equation}\label{eq:prob-cond2}
Q_k^* > \lambda_{k+1} \quad \Longleftrightarrow
\quad
\sum_{j=1}^{k}\frac{1}{\lambda_j} < \frac{k-1}{\lambda_{k+1}}
\quad
\Longleftrightarrow
\quad
\lambda_{k+1} \sum_{j=1}^{k}\frac{1}{\lambda_j} < k-1\,.
\end{equation}
We now compute the probability of the event ``Exact $k$ species survive in the long run, i.e. \replaced{by Theorem \ref{thm:main-converge}, contribute to the stable equilibrium }{ contribute} in the stable equilibrium." base on the condition \eqref{eq:prob-cond1} and \eqref{eq:prob-cond2}.
\begin{thm} \label{thm:prob}
\added{Let $N\geq 1$ denoting the species pool size, and $k\leq N$ a given value of number of coexisting species, be two fixed strictly positive integers. Let $n$ be the random variable defined as the number of coexisting species, under random $\lambda_i$. Then the probability $P(n=k|N)$} \deleted{Probability} of the event "exact $k$-species-equilibrium in a random pool of size $N$" is:
\begin{equation} \label{eq:prob-simple}
\mathbb{P}(n = k|N)
=
\binom{N}{k} 
\frac{k (k-1)^{N-k}}{N}
\int_{[0,1]^{k-1}} 
\mathbbm{1}_{\left\{ \sum_{j=1}^{k-1} u_j < 1 \right\}}
\frac{\left(1 - \max_j u_j\right)^N}{\left(k - \sum_{j=1}^{k-1} u_j\right)^{N-k}}
\prod_{j=1}^{k-1} \frac{du_j}{(1-u_j)^2}.
\end{equation}
\end{thm}
%
%
%
%
%
%
\begin{proof}
$\bullet$ Firstly, by symmetry \replaced{of indexes }{ and condition $\lambda_k= \min \left\{\lambda_1,\dots,\lambda_k\right\}$}, the probability of the target event is that
\begin{equation} \label{eq:prob-form-1st}
\mathbb{P}(n = k\rvert N) 
= 
\binom{N}{k}k \,
\mathbb{P}\!\left(
\underbrace{\lambda_{k} \sum_{j=1}^{k-1} \frac{1}{\lambda_j} > k-2}_{\text{\eqref{eq:prob-cond1} on these $k$ species}}
\;\;\text{and}\;\;
\underbrace{\max_{j>k} \lambda_j < \frac{k-1}{\sum_{j=1}^k \frac{1}{\lambda_j}}}_{\text{\eqref{eq:prob-cond2} for the rest $N-k$ species}}
\right)\,,
\end{equation}
\added{in which, the probability on the right-hand-side has imposed upon, the condition $\lambda_k= \min \left\{\lambda_1,\dots,\lambda_k\right\}$.}

Let $t:= \min \left\{\lambda_1,\dots,\lambda_k\right\}=\lambda_k$. Conditional on $t$, the remaining $k-1$ variables are i.i.d. $Y_j \sim \mathrm{U}(t,1)$. 
\\ \\
$\bullet$ Secondly, we consider condition \eqref{eq:prob-cond1}, which \replaced{reads }{ is read as}
\begin{equation} \label{eq:new-cond1-prob}
\lambda_{k} \sum_{j=1}^{k-1} \frac{1}{\lambda_j} > k-2
\Longleftrightarrow
t\sum_{j=1}^{k-1}\frac{1}{Y_j} > k-2.
\end{equation}
$\bullet$ Thirdly, we consider condition \eqref{eq:prob-cond2}, which \replaced{reads }{ is read as}
\begin{equation} \label{eq:new-cond2-prob}
\max_{i>k} \lambda_{i} \sum_{j=1}^{k} \frac{1}{\lambda_j} < k-1 
\;
\Longleftrightarrow
\;
Y_i < \frac{k-1}{\sum_{j=1}^{k}\frac{1}{Y_j}},\;\forall i > k
\;
\Longleftrightarrow
\;
Y_i < \frac{k-1}{\frac{1}{t} + \sum_{j=1}^{k-1}\frac{1}{Y_j}},\;\forall k+1 \leq i \leq N.
\end{equation}
$\bullet$ Finally, noting that $t \in \left[0,1\right]$, from \eqref{eq:prob-form-1st}, \eqref{eq:new-cond1-prob} and \eqref{eq:new-cond2-prob}, the probability of the $k$-species-equilibrium is
\begin{equation*}
\mathbb{P}(n = k\rvert N) 
= \binom{N}{k} \, k \int_0^1 \int_{(t,1]^{k-1}} 
\mathbbm{1}_{\left\{ t \sum_{j=1}^{k-1} \frac{1}{y_j} > k-2 \right\}}
\left( \frac{k-1}{\tfrac{1}{t} + \sum_{j=1}^{k-1} \tfrac{1}{y_j}} \right)^{N-k}
dy \, dt\,.
\end{equation*}
By changing variables $r_j := t/y_j \in \left(t,1\right)$, due to $y_j \in \left(t,1\right)$ for all $j$, then changing the order of integrals, we have that
\begin{equation*}
\begin{aligned}
\mathbb{P}(n = k\rvert N) 
&
=
\binom{N}{k} \, k (k-1)^{N-k} 
\int_0^1 t^{N-1} \int_{[0,1]^{k-1}}
\mathbbm{1}_{\left\{t < \min r_j\right\}}
\mathbbm{1}_{\left\{ \sum_{j=1}^{k-1} r_j > k-2 \right\}}
\frac{1}{\bigl(1 + \sum_{j=1}^{k-1} r_j \bigr)^{N-k}}
\prod_{j=1}^{k-1} \frac{dr_j}{r_j^2} \, dt
\\
&
=
\binom{N}{k} \, \frac{k (k-1)^{N-k}}{N}
\int_{[0,1]^{k-1}} 
\mathbbm{1}_{\left\{ \sum_{j=1}^{k-1} r_j > k-2 \right\}}
\frac{\bigl(\min_j r_j \bigr)^{N}}{\bigl(1 + \sum_{i=1}^{k-1} r_i \bigr)^{N-k}}
\prod_{j=1}^{k-1} \frac{dr_j}{r_j^2}.
\end{aligned}
\end{equation*}
By changing variable $u_j:= 1 - r_j \in \left(0,1\right)$ for $1\leq j\leq k$, we obtain that
\begin{equation*}
\mathbb{P}\left(n=k\rvert N\right)
= 
\binom{N}{k} 
\frac{k (k-1)^{N-k}}{N}
\int_{[0,1]^{k-1}} 
\mathbbm{1}_{\left\{ \sum_{j=1}^{k-1} u_j < 1 \right\}}
\frac{\left(1 - \max_j u_j\right)^N}{\left(k - \sum_{j=1}^{k-1} u_j\right)^{N-k}}
\prod_{j=1}^{k-1} \frac{du_j}{(1-u_j)^2}.
\end{equation*}

\end{proof}

We note that, trivially, this Theorem \ref{thm:prob} is consistent with the Proposition \ref{prop:1unstable}.

\paragraph{Numerical computation of the probability distribution $P(n=k|N)$} To compute \eqref{eq:prob-simple}, we suggest that Monte Carlo (MC) methods are appropriate. In which, MC methods, or MC experiments, are a broad class of computational algorithms that rely on repeated random sampling to obtain numerical results.
 
We suggest MC methods because the integrand is high dimensional, non-smooth (indicator and $\max$), and has boundary singularities $\big((1-u_j)^{-2},\dots, (k-\sum u_j)^{-(N-k)}\big)$. These features break the smoothness assumptions behind tensor-product/adaptive quadrature and sparse grids (cost blows up with $k$ and accuracy deteriorates near corners), whereas MC’s $O(M^{-1/2})$ error is dimension- and smoothness-agnostic.

To evaluate the $(k-1)$-dimensional integral—with an indicator, a $\max$, and boundary singularities—
deterministic cubature scales poorly, so we use (quasi-)MC with importance sampling.
MC estimates the integral by the sample mean of the integrand at $M$ random/low-discrepancy 
points from the domain, giving an unbiased estimator with root-mean-square error $O(M^{-1/2})$.
We use Sobol points and a Beta-biased proposal that concentrates mass near $u_j \approx 1$ 
to match $(1-u_j)^{-2}$, which reduces variance; standard errors come from the sample variance.
\\ \\
We perform model simulations and highlight very well-matching results in Figure \ref{fig:10strains_probs}, in which the probability of the event "exact $k$-species-equilibrium" is computed in two ways: 
\begin{enumerate}
\item based on the long run behavior of generated invader-driven replicator dynamics, and
\item our probability formula in Theorem \ref{thm:prob}. 
\end{enumerate}
\begin{figure}[htb!]
\centering
\includegraphics[width=0.6\linewidth]{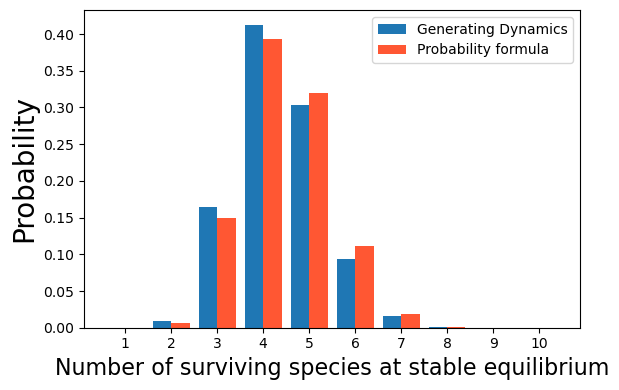}
\caption{\textbf{Probability of $n-$ species coexistence in a pool of $N$ species (here $N=10$)}. We plot the probability mass function of the number of coexisting species at the unique stable equilibrium of the invader-driven replicator \eqref{eq:repli_invader} with $N = 10$ and i.i.d. invader fitnesses
	$\lambda_i \sim \mathcal{U}[0,1]$.
	Blue bars: empirical frequencies from 10,000 independent ODE runs started in the simplex interior (survival counted when $z_i(T) > 10^{-4}$).
	Orange bars: probabilities obtained by Monte-Carlo evaluation of the Theorem \ref{thm:prob} integral formula, 
	reported with 95$\%$ confidence intervals (as listed in the Supplementary material S2). The two estimates yield similar probabilities of the target events. For a similar figure for $N=20$ see Figure \ref{fig:20strains_probs}.}
\label{fig:10strains_probs}
\end{figure}

\begin{rmk}
According to \eqref{eq:prob-cond1} and \eqref{eq:prob-cond2}, we can compute the probability using the following integral
\begin{equation} \label{eq:prob-complex}
\mathbb{P}(n = k\rvert N) 
= 
N! \int_{1 > u_1 > \cdots > u_N > 0} \mathbbm{1}_{\left\{u_{k+1} < \frac{k-1}{\sum_{j=1}^k \frac{1}{u_j}} < u_k \right\}} du_1 \cdots du_N.
\end{equation}
We observe that, formulation \eqref{eq:prob-complex} is more intuitive, meanwhile the expression \eqref{eq:prob-simple} is more easily computable, because of:
\begin{itemize}
\item Dimensionality: the first is $N$-dimensional; the second is only $(k-1)$-dimensional (independent of $N$).
\item {Domain:} \eqref{eq:prob-complex} integrates over an ordered simplex (awkward constraints); the other equation \eqref{eq:prob-simple} lives on the hypercube $[0,1]^{k-1}$ with a single simple constraint $\sum u_j < 1$.
\item {Structure:} \eqref{eq:prob-simple} has a smooth, factorable weight $(1 - \max u)^N / (k - \sum u)^{N-k} \prod (1-u_j)^{-2}$; easy for quadrature/Monte Carlo.
\item {Efficiency:} in \eqref{eq:prob-complex}, rejection/ordering strongly reduces Monte-Carlo efficiency as $N$ grows; the \eqref{eq:prob-simple} has no ordering/rejection.
\item {Asymptotics:} in the \eqref{eq:prob-simple}, $N$ appears only as exponents—handy for large $N$ analysis.
\end{itemize}
However, we remark that for small combinations of species pool size and number of coexisting species $(N,k)$ (e.g., $N=k=2$) the formulation \eqref{eq:prob-complex} is fine; otherwise the other integral formulation \eqref{eq:prob-simple} is more convenient to compute $\mathbb{P}(n=k\rvert N)$.

\end{rmk}

\subsection{Computing special low-dimensional cases of $\mathbb{P}(n=k|N)$}

Here we illustrate how we can compute the probability that $k$ species survive in an invader-driven replicator system of $N$ species, $\mathbb{P}(n=k|N)$, for small pool size, using the results on the selection mechanism of surviving species. Let us consider $X_1,X_2,\dots,X_N$ as $N$ independent and identically distributed random variables drawn from $\mathcal{U}\,[0,1]$, with realizations the invasiveness traits of our species, $\lambda_1,\lambda_2,\dots,\lambda_N$, respectively. Without loss of generality, we can order and renumber the invasion fitnesses such that $0<\lambda_N<\dots<\lambda_2<\lambda_1<1$ and treat them as realizations of the order statistics $\min\{X_1,X_2,\dots,X_N\}=:X_{(N)}<\dots<X_{(2)}<X_{(1)}:=\max\{X_1,X_2,\dots,X_N\}$. Moreover, rescaling the invasion fitnesses such that $0<\lambda_N<\dots<\lambda_2<\lambda_1=1$ we end up with $N-1$ order statistics, $X_{(2)},\dots,X_{(N)}$. The joint probability density function (PDF) of the $N-1$ order statistics is given by the expression,
\begin{align}
    f_{X_{(2)},\dots,X_{(N)}}(\lambda_2,\dots,\lambda_N) &= (N-1)!\,f(\lambda_2)\cdots f(\lambda_N) = (N-1)! \ ,
\end{align}
where we use that the PDF of $\mathcal{U}\,[0,1]$ is $f(\lambda_i) = 1$, $i=2,\dots,N$. Below we show how to compute these probabilities for low dimension.

\paragraph{Case $N=3$.}  Let us start with the easiest case, $N=3$ and $n=2$, with $0<\lambda_3<\lambda_2<\lambda_1=1$ and $f_{X_{(2)},X_{(3)}}(\lambda_2,\lambda_3) = 2!=2$. From \eqref{eq:prob-cond1}-\eqref{eq:prob-cond2}, we know that $k=2$ if $\lambda_3\leq Q_2^*=\frac{\lambda_1\,\lambda_2}{\lambda_1+\lambda_2} = \frac{\lambda_2}{1+\lambda_2}$, hence we compute $\mathbb{P}(n=2|N=3)$ integrating the PDF over the whole space \footnote{Note that the integration order has to be from $\lambda_N$ to $\lambda_2$. Recall that $\lambda_3\leq Q_2^*$ and $\lambda_3\leq Q_3^*$ are equivalent conditions, but the integral is computable only if we use $Q_2^*$.} using the indicator function $\mathbbm{1}_{\bigl\{ \lambda_3\leq Q_2^* \bigr\}}$,

\begin{align}
    \mathbb{P}(n=2|\,N=3) & 
    = 
    \mathbb{P}\left( \lambda_3\leq Q_2^* \right) = \int_{0<\lambda_3<\lambda_2<1} f_{X_{(2)},X_{(3)}}\, \mathbbm{1}_{\bigl\{ \lambda_3\leq Q_2^* \bigr\}}\,d\lambda_3 d\lambda_2 \\
    & = 2\int_0^{1} \int_0^{Q_2^*} d\lambda_3 d\lambda_2 = 2 \int_0^{1} Q_2^*\, d\lambda_2 = 2 \int_0^{1} \frac{\lambda_2}{1+\lambda_2}\, d\lambda_2 \nonumber= 2-2\,\ln(2) \approx 0.6137 \ . \nonumber
\end{align}
Then, the probability that all three species survive, in a pool size of $N=3$ is given by
\begin{align}
    \mathbb{P}(n=3|N=3) = 1-\mathbb{P}(n=2|N=3) = -1+2\ln(2) \approx 0.3863\ .
\end{align} 

\paragraph{Case $N=4$.} We proceed in a similar way as before with $N=4$, $n=2$, and $0<\lambda_4<\lambda_3<\lambda_2<\lambda_1=1$. Using the joint PDF $f_{X_{(2)},X_{(3)}, X_{(4)}}(\lambda_2,\lambda_3, \lambda_4) = 3!=6$ and imposing again that $\lambda_3\leq Q_2^*$, we obtain
\begin{align}
\mathbb{P}(n=2|N=4) & 
=
\mathbb{P}\left( \lambda_3\leq Q_2^* \right) 
= 
\int_{0<\lambda_4<\lambda_3<\lambda_2<1} f_{X_{(2)},X_{(3)}, X_{(4)}}\, \mathbbm{1}_{\bigl\{ \lambda_3\leq Q_2^* \bigr\}}\,d\lambda_4 d\lambda_3 d\lambda_2 \\
    & = 6\int_0^1\int_0^{Q_2^*}\int_0^{\lambda_3}d\lambda_4 d\lambda_3 d\lambda_2 = 6\int_0^1\int_0^{Q_2^*}\lambda_3\,d\lambda_3 d\lambda_2 = 3 \int_0^1 (Q_2^*)^2 \, d\lambda_2 \nonumber \\
    & = 3 \int_0^1 \frac{\lambda_2^2}{(1+\lambda_2)^2}\,d\lambda_2 = 3\,\left( \frac{3}{2} - 2\ln(2) \right)= \frac{9}{2} - 6\, \ln(2) \approx 0.3411\ .\nonumber
\end{align}
Now, for $N=4$ and $n=3$ the conditions are $\lambda_3>Q_2^*$ and $\lambda_4\leq Q_3^* = \frac{2}{1+\frac{1}{\lambda_2}+\frac{1}{\lambda_3}}$. Hence, 
\begin{align}
\mathbb{P}(n=3|N=4) & 
= 
\mathbb{P}\left( \lambda_3 > Q_2^*\  \bigcap \lambda_4\leq Q_3^* \right) \\
    & = \int_{0<\lambda_4<\lambda_3<\lambda_2<1} f_{X_{(2)},X_{(3)}, X_{(4)}}\, \mathbbm{1}_{\bigl\{ \lambda_3 > Q_2^*\  \bigcap \lambda_4\leq Q_3^*  \bigr\}}\,d\lambda_4 d\lambda_3 d\lambda_2 \nonumber\\
    & = 6\int_0^1\int_{Q_2^*}^{\lambda_2}\int_0^{Q_3^*} d\lambda_4\,d\lambda_3\,d\lambda_2 = 6\int_0^1\int_{Q_2^*}^{\lambda_2}Q_3^*\,d\lambda_3\,d\lambda_2 \nonumber\\
    &= \int_0^1 Q_2^*\int_{Q_2^*}^{\lambda_2}\frac{\lambda_3}{Q_2^* +\lambda_3}\,d\lambda_3\,d\lambda_2 \approx 0.5490 \ , \nonumber
\end{align}
which we solved with an algebraic calculator as computations get huge and complicated as the dimension increases. We present the results in Table \ref{tab:probabilities} for $N=3, 4, 5$, where the expected value of $n$ is also shown. 
The consistency of these results with those obtained from simulations provides further evidence in favour of the selection mechanism of coexisting species, uncovered in Theorems 5 and 9.

\begin{table}[t!]
\centering
\begin{tabular}{c|c|c|c|c|c|c}
\hline
\multirow{2}{*}{$\boldsymbol{N}$} & 
\multirow{2}{*}{$\boldsymbol{k}$}
 & \multicolumn{3}{c|}{\textbf{Analytical computations}} 
 & \multicolumn{2}{c}{\textbf{Simulations}} \\
\cline{3-7}
 & & \boldmath$\mathbb{P}(n=k|N)$  & \boldmath$\%$ & \boldmath$\mathbb{E}(n)$ & \boldmath$\%$ & \boldmath$\mathbb{E}(n)$ \\
\hline
\hline
\multirow{2}{*}{$3$} 
 & $2$ & $2-2\,\ln(2)$     & $61.37$ & \multirow{2}{*}{$2.386$} & $60.20$ & \multirow{2}{*}{$2.398$} \\
 & $3$ & $-1 + 2\,\ln(2)$  & $38.63$ &                          & $39.80$ &  \\
\hline
\multirow{5}{*}{$4$} 
 & $2$ & $\tfrac{9}{2}-6\,\ln(2)$ & $34.11$ & \multirow{5}{*}{$2.769$} & $35.6  0$ & \multirow{5}{*}{$2.749$} \\
 & \multirow{2}{*}{$3$} & $-2\,\pi^2 + 42\,\ln(2)-12-24\,\ln^2(2)$ & \multirow{2}{*}{$54.90$} & & \multirow{2}{*}{$53.90$} &  \\
 &                      & $-18\,\ln(3)-24\,Li_2(\added{-2} \cancel{3})$               &                           & & & \\
 & \multirow{2}{*}{$4$} & $2\,\pi^2 -36\,\ln(2)+\tfrac{17}{2}+24\,\ln^2(2)$ & \multirow{2}{*}{$10.99$} & & \multirow{2}{*}{$10.50$} &  \\
 &                      & $+18\,\ln(3)+24\,Li_2(\added{-2} \cancel{3})$               &                           & & & \\
\hline
\multirow{6}{*}{$5$} 
 & $2$ & $\tfrac{17}{2} - 12\,\ln(2)$ & $18.22$ & \multirow{5}{*}{$3.113$} & $18.20$ & \multirow{5}{*}{$3.097$} \\
 & \multirow{2}{*}{$3$} & $-105-24\,\pi^2 +420\,\ln(2)-288\,\ln^2(2)$ & \multirow{2}{*}{$54.69$} & & \multirow{2}{*}{$56.20$} & \\
 &                      & $-204\,\ln(3)-288\,Li_2(\added{-2} \cancel{3})$                  &                           & & & \\
 & $4$ & -- & $24.67$ & & $23.30$ &  \\
 & $5$ & -- & $2.33$ & & $2.30$ &   \\
\hline
\end{tabular}

\caption{\textbf{Explicit probabilities that $k$ species persist in an invader-driven system of $N$ species.} Here $\lambda_i\sim\mathcal{U}\,[0,1]$, and $E[n]$ denotes the expected value of $n$. Values are computed analytically from the expressions in Section 3.1, and numerically from model simulations  (1000 invader-driven systems for each $N$ and evaluation of the coexisting species selection mechanism in Theorem 5). For
$\mathbb{P}(n=4|N=5)$ and $\mathbb{P}(n=5|N=5)$ there are no closed expressions. Note that we obtain dilogarithms, defined as $
    Li_2(x) = \int_1    ^x\frac{\ln(1-u)}{u}du$, $x\in \mathbb{C}$. \added{The integral is taken on $\mathbb{C}$-plane, starting from $1$ and ending at $x$, and path-independence.}}
\label{tab:probabilities}
\end{table}

\subsection{Mean number of coexisting species: stable community size} \label{subsec:mean-NoSpecies}
Although we have \replaced{a representation of the}{ the explicit} probability distribution (Theorem \ref{thm:prob}), it is very hard to obtain a closed expression analytically for the mean number of coexisting species $E[n]$ for stable equilibria in the invader-driven case. 
\begin{figure}[h!]
    \centering
    \includegraphics[width=0.4\linewidth]{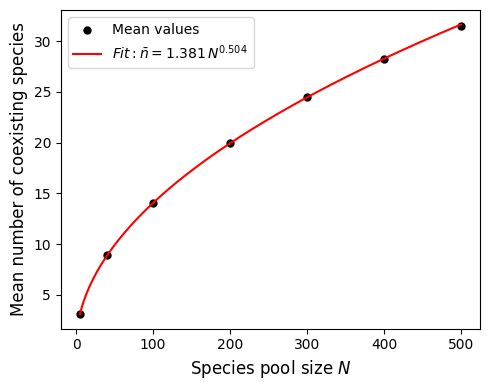}
    \caption{\textbf{Mean number of coexisting species vs. species pool size, in an invader-driven replicator dynamics.} We plot the mean number of coexisting species in the stable equilibrium $\mathbb{E}[n]$ against species pool size $N$, in invader-driven systems with positive invasion fitness traits. We superimpose the power law relation (red line) between $\mathbb{E}[n]$ and $N$, which has been fitted in log-log scale using values $N=5,\, 40,\, 100,\, 200,\, 300,\, 400,\, 500$. Each data point (black circle) is made out of $1000$ different systems with $\lambda_i \sim \mathcal{U}\, [0,1]$ $\forall\,i\in S.$ The plot shows the mean stable community size increases as $\sim \sqrt{2N}$.}
    \label{fig:nbar}
\end{figure}
In Figure \ref{fig:nbar}, summarizing several random realizations of the invader-driven $\Lambda$ matrix, we evaluate numerically how the mean number of coexisting species in a system increases with species pool size $N$. We observe that the relationship seems to be \replaced{well }{ perfectly}-captured by $$\mathbb{E}[n]\sim1.4\sqrt{N},$$ hence indicating a much less than linear increase of effective species number at equilibrium with pool size.
\begin{change}
\begin{thm}\label{th:En}
Let $\lambda_1\ge\lambda_2\ge\cdots\ge\lambda_N>0$ be the descending order statistics of i.i.d.
$\mathcal U[0,1]$ invasion fitness. Define $Q^*_k= 
\frac{k-1}{\sum_{j=1}^k \frac{1}{\lambda_j}}$ and  the random variable 
$n\in\{1,\dots,N\}$ of surviving species define as the unique integer such that 
$$\lambda_{n+1}\leq Q^*_n\leq \lambda_{n}.$$
 Then, as $N\to\infty$, we have  $$\dfrac{n}{\sqrt{2N}}\xrightarrow{\mathbb{P}}1$$  and in particular
$\mathbb{E}[n]=\sqrt{2N}+o\!\left(\sqrt N\right)$.

\end{thm}

\deleted{Indeed, we prove that $\mathbb{E}[n]$ can be approximated by $\sqrt{2N} + o(1)$. The proofs of lemmas in this section are presented in Section \ref{app:proofs}}. 
\\
The proof is done by a sequences of lemmas whose proofs are presented in the Supplementary Material.
\end{change}
Firstly, 
given random \added{invasion fitness traits among $N$ species} $\lambda_1, \ldots, \lambda_N \overset{\text{i.i.d.}}{\sim} \mathcal{U}[0,1]$, we focus on the \deleted{conditions involving the} distribution of \textit{ordered} values $\lambda_j$, \added{where $\lambda_j$ denotes} \deleted{denoting} the $j$-th largest sample value. We have the following Lemma \ref{lmm:PDF-Beta}, which is a well-known result and presented clearly in \cite[p.63]{Gentle2009ComputationalS} and \cite{Jones2009KumaraswamysDA}.

\begin{lmm} \label{lmm:PDF-Beta}
The probability density function (PDF) of ordered $j-th$ element $\lambda_j$, for all $1\leq j\leq N$, follows the Beta distribution \added{with parameters $(N+1-j,j)$} and is given by
$$
f_{\lambda_j}(x) 
= \frac{x^{N-j}(1-x)^{j-1}}{B(N+1-j,j)}
, 
\qquad 0 < x < 1,
$$
where
$$
B(N+1-j,j) = 
\frac{\Gamma(j)\Gamma(N+1-j)}{\Gamma(N+1)}
=
\frac{(j-1)!(N-j)!}{N!}.
$$
\end{lmm}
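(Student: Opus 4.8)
The plan is to recognize this as the standard distribution of order statistics of a uniform sample and to derive it directly rather than merely invoking the cited references. I keep in mind the ordering convention fixed earlier in the excerpt, namely $\lambda_1 \ge \lambda_2 \ge \cdots \ge \lambda_N$, so that $\lambda_j$ is the $j$-th \emph{largest} among the i.i.d.\ draws $X_1,\dots,X_N \sim \mathcal{U}[0,1]$. I would present two short routes and keep whichever reads more cleanly in context.

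First route (via the cumulative distribution function). The event $\{\lambda_j \le x\}$ says the $j$-th largest value does not exceed $x$, which is equivalent to saying that at most $j-1$ of the $N$ samples strictly exceed $x$, i.e.\ at least $N-j+1$ of them fall in $[0,x]$. Since each sample independently lands in $[0,x]$ with probability $x$, this gives the binomial tail
\[
F_{\lambda_j}(x) \;=\; \sum_{m=N-j+1}^{N} \binom{N}{m} x^{m} (1-x)^{N-m}, \qquad 0<x<1.
\]
Differentiating in $x$, using $\binom{N}{m}m = N\binom{N-1}{m-1}$ and $\binom{N}{m}(N-m) = N\binom{N-1}{m}$, the successive terms telescope and all but one cancel, leaving
\[
f_{\lambda_j}(x) \;=\; \frac{N!}{(N-j)!\,(j-1)!}\, x^{N-j}(1-x)^{j-1},
\]
which is the claimed expression once we identify $\tfrac{1}{B(N+1-j,j)} = \tfrac{N!}{(N-j)!\,(j-1)!}$ through the stated Beta--Gamma identity.

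Second route (infinitesimal/multinomial density). Alternatively I would compute $f_{\lambda_j}(x)\,dx$ directly as the probability that exactly one sample lies in the window $[x,x+dx]$, exactly $N-j$ of the remaining samples lie below it in $[0,x]$, and exactly $j-1$ lie above it in $[x+dx,1]$. The number of ways to assign the labelled samples to these three groups is the multinomial coefficient $\tfrac{N!}{(N-j)!\,1!\,(j-1)!}$, and the associated probability weight is $x^{N-j}(1-x)^{j-1}\,dx$, using that the uniform density equals $1$. This reproduces the same density immediately.

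I do not expect a genuine obstacle here, since the result is classical; the only points requiring care are the bookkeeping of the ordering convention (largest-to-smallest, so $\lambda_j$ corresponds to $N-j$ values strictly below and $j-1$ strictly above) and verifying in the first route that the telescoping sum collapses to precisely the single surviving term $g_{N-j+1}$ (with the boundary term $g_{N+1}$ vanishing because $\binom{N}{N+1}=0$). The normalization is then automatic, as the Beta density integrates to one by the very definition of $B(N+1-j,j)$.
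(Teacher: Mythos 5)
Your proposal is correct: the paper itself treats this lemma as classical (citing standard references rather than spelling out a proof), and the argument it relies on is precisely your second route --- counting the configurations with exactly one sample in $[x,x+dx]$, exactly $N-j$ samples in $(0,x)$, and the remaining $j-1$ in $(x+dx,1)$, which yields $\frac{N!}{(N-j)!\,(j-1)!}\,x^{N-j}(1-x)^{j-1}$ after dividing by $dx$. Your first route, via the binomial-tail CDF $F_{\lambda_j}(x)=\sum_{m=N-j+1}^{N}\binom{N}{m}x^{m}(1-x)^{N-m}$ and the telescoping differentiation, is an equally valid alternative arriving at the same density, so either presentation would serve.
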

\noindent
\begin{change}
According to Theorem \ref{thm:main-converge}, we recall that, the unique equilibrium has a given number of $n$ species if and only if
\begin{equation} \label{eq:expect-condition}
\lambda_{n+1} < Q^*_n < \lambda_n
\end{equation}
with $\lambda_N \le \cdots \leq \lambda_1$ the order statistics.  
Thus we have the following equivalence:
\begin{equation} \label{eq:probequiv}
P(n=k|N)=P(\lambda_{k+1} < Q_k^* < \lambda_k),
\end{equation}
which maps in a way the probability of the number of coexisting species $n$ to the probability distribution of the ordered random $\lambda_j$. Namely we seek the average index $k$ among the highest $\lambda_i$ that satisfies \eqref{eq:expect-condition}.

However, the direct computation of $\mathbb{E}[n]$ stays hard to compute directly. But for large $N$, we are able to use \eqref{eq:expect-condition} thanks to a well-known concentration phenomenon of Beta variables. 

\begin{lmm}\label{lmm:concentrated}
Fix $C>0$. For each integer $N\ge 1$ and each $1\le k\le C\sqrt N$, let $\lambda_j$ denote the $j$-th largest order statistic of $N$ i.i.d.\ $\mathcal U[0,1]$ variables. Then for every $1\le j\le k$,
\[
  \mathbb E[\lambda_j] = 1 - \frac{j}{N+1}:= u_j.
\]
Moreover, as $N\to+\infty$, we have the concentration estimates (see Definition \ref{def:Op} for the definition of $\mathcal{O}_{\mathbb{P}}).$
\[
\lambda_j =  u_j + \mathcal O_{\mathbb P}(N^{-3/4})
\qquad\text{and}\qquad
Q^*_k =  S_k + \mathcal O_{\mathbb P}(N^{-3/4}),
\]
where
\[
 S_k := \frac{k-1}{\sum_{j=1}^k  \frac{1}{u_j}}
 \quad
 \text{and}
 \quad
 Q^*_k := \frac{k-1}{\sum_{j=1}^k \frac{1}{\lambda_j}} .
\]

\end{lmm}
\begin{proof}
    See Supplementary Materials Section \ref{app:proofs}.
\end{proof}
The next lemma shows that the deterministic quantities $\widehat{u_j}$ satisfy an asymptotic similar to the Theorem \eqref{th:En}. 
\begin{lmm}\label{lmm:determinEn}
Let $N\geq 2$ be an integer and define for each $k\in\{1,\cdots,N\}$, ${u}_k=1-\frac{k}{N+1}$ and ${S}_k= \frac{k-1}{\sum_{j=1}^k \frac{1}{u_j}}$.
 Then :
 \begin{enumerate}
 	\item[(i)] There exists a unique integer $\widehat{n}:=\widehat{n}(N)\leq N$ such that $${u}_{\widehat{n}+1}\leq {S}_{\widehat{n}}\leq {u}_{\widehat{n}}.$$
 	\item[(ii)] As $N\to +\infty$ we have the asymptotic profile $\widehat{n}\sim\sqrt{2N}.$ 
 \end{enumerate}
\end{lmm}
\begin{proof}
    See Supplementary Section \ref{app:proofs}.
\end{proof}
The previous asymptotic Lemma together with the concentration phenomena of Lemma \ref{lmm:concentrated}  yields the proof of Theorem \ref{th:En}.
\begin{proof}[Proof of Theorem \ref{th:En}]

   Let $\lambda_1 \ge \cdots \ge \lambda_N$ be the order statistics of i.i.d.\ $\mathcal U[0,1]$ variables.
By Proposition \ref{rmk:prop-k}, there exists a unique $n\in\{1,\dots,N\}$ such that
\[
\lambda_{n+1} \le Q^*_n \le \lambda_n.
\]
\medskip
\noindent
We reuse the notations $u_k$ and $S_k$ for all $1\leq k\leq N$ in Lemma \ref{lmm:concentrated} and Lemma \ref{lmm:determinEn}.
By Lemma~\ref{lmm:determinEn}, there exists a unique integer $\widehat n$ such that
\[
u_{\widehat n+1} \le S_{\widehat n} \le u_{\widehat n},
\qquad
\widehat n \sim \sqrt{2N}.
\]
\medskip
\noindent
By Lemma~\ref{lmm:concentrated}, for every $j=\mathcal O(\sqrt N)$,
\[
\lambda_j = u_j + \mathcal O_{\mathbb P}(N^{-3/4}),
\qquad
Q^*_{\widehat n} = S_{\widehat n} + \mathcal O_{\mathbb P}(N^{-3/4}).
\]
Thus
\[
\lambda_{\widehat n+1} \le Q^*_{\widehat n} \le \lambda_{\widehat n}
\]
with probability tending to $1$ as $N\to\infty$.
By uniqueness of $n$, this implies
\(
\mathbb P(n=\widehat n)\xrightarrow[N\to\infty]{}1.
\)
which reads also $n = \widehat n + o_{\mathbb P}(1).$
Together with $\widehat{n}\sim \sqrt{2N}$ this implies the final relation
\[
\frac{n}{\sqrt{2N}}
= \frac{\widehat n}{\sqrt{2N}} + o_{\mathbb P}(1)
\xrightarrow{\mathbb P} 1.
\]
\medskip
\noindent
Last, since $0\le n\le N$, the sequence $(n/\sqrt{2N})$ is uniformly integrable. Hence convergence in probability implies convergence in $L^1$, and therefore

\[
\frac{\mathbb E[n]}{\sqrt{2N}} \to 1,
\qquad
\mathbb E[n] = \sqrt{2N} + o(\sqrt N).
\]
\end{proof}

\end{change}

\noindent

\section{Sequential system assembly, niche saturation and mean invasion fitness $Q^*$}
Let us consider an invader-driven replicator system with all positive invasion fitnesses, which reaches its stable state (its global attractor), including $k$ species persisting. Assume that $\lambda_i \in \left[0,1\right]$ for all $i$ and $\lambda_k \leq \dots \lambda_2 \leq \lambda_1$. What happens when a new invader appears? 

We have several possibilities for the outcome of such event, which depends on both the invasion trait of the invader, and on the species and system traits of the resident community \citep{gjini2023towards, kurkjian2021impact, shea2002community, kolar2001progress, case1990invasion}. With our replicator framework, and for this particular case of invader-driven invasion fitness matrix, all these outcomes can be fully analytically predicted, and they include: 
\begin{itemize}
\item[i)]  \textit{community augmentation, }
\item[ii)] \textit{rejection failure,} and 
\item[iii)] \textit{species removal and replacement,}
\end{itemize}
with several sub-cases. 
We detail them below and provide a succinct and visual summary in Table \ref{tab:summary}.
\subsection{Conditions for new invasion outcomes, given $k$ coexisting species}\label{sec:sub:condition-invader}
\paragraph{\textbf{1. Community augmentation}} There are two particular cases for community augmentation: i) the invader is worse than the worst species, or  ii) the invader is better than at least one species. 
\\ \\
\textbf{(i)} \underline{Augmentation via adding a less fit species.} Now we find the necessary and sufficient condition for a new species $k+1$ which is less fit than these $k$ species, i.e. $\lambda_{k+1} \leq \lambda_i$, for all $1\leq i\leq k$, such that this specie $k+1$ can invade and persist in the dynamics at its stable state.

By the threshold in Theorem \ref{thm:main-converge}, the equivalent condition is that $Q^*_{k+1} < \lambda_{k+1}$. Recalling that $Q^*_{k+1} = \frac{k}{\sum_{j=1}^k \frac{1}{\lambda_k} + \frac{1}{\lambda_{k+1}}}$, by direct computation, we obtain that the equivalent condition $\lambda_{k+1} > \frac{k-1}{\sum_{j=1}^k \frac{1}{\lambda_j}}=Q^*_k$.
\\
In this case, the equivalent condition is as follows
\begin{equation}\label{eq:aug-c1}
Q^*_k<\lambda_{k+1} < \lambda_k 
\end{equation}

Since $\lambda_{k+1} > Q^*_{k} \Longleftrightarrow \lambda_{k+1} > Q^*_{k+1}$, then if we choose $\lambda_{k+1} \in \left(Q_{k}^*,\lambda_k\right)$, this leads to $\left(Q^*_{k+1},\lambda_{k+1}\right) \neq \emptyset$. This allows us to choose $\lambda_{k+2} \in \left(Q^*_{k+1},\lambda_{k+1}\right)$, yielding species $k+2$ can invade the $k+1$-species system. By induction, we can construct a sequence of species such that we can add them as many times as we want to invade the system.
\\ \\
\textbf{(ii)} \underline{Augmentation via adding a fitter than at least one species.} Now we find the necessary and sufficient condition for the community augmentation when the new species $k+1$ is fitter than at least one species. In this case, we obtain that $\lambda_k \leq \min\left\{\lambda_{k+1},\lambda_1,\dots,\lambda_{k-1}\right\}$, i.e. $\lambda_{k+1} > \lambda_k$.

By the threshold in Theorem \ref{thm:main-converge}, the equivalent condition is that $Q^*_{k+1} < \lambda_{k}$. By straightforward computations, we have 
\begin{equation*}
Q^*_{k+1} < \lambda_{k}
\Longleftrightarrow
\lambda_{k+1} 
<
\left( \frac{k}{\lambda_k} -\sum_{j=1}^{k}\frac{1}{\lambda_j} \right)^{-1}.
\end{equation*}
For the availability of $\lambda_{k+1}$, we have the condition
\begin{equation*}
\lambda_k < \left( \frac{k}{\lambda_k} -\sum_{j=1}^{k}\frac{1}{\lambda_j} \right)^{-1}
\Longleftrightarrow
\lambda_k > Q^*_{k},
\end{equation*}
which holds true by our assumption of $k$-species equilibrium.
Therefore, in this case, the equivalent condition is as follows. 
\begin{equation} \label{eq:aug-c2}
\lambda_k
<
\lambda_{k+1} 
<
\left( \frac{k}{\lambda_k} -\sum_{j=1}^{k}\frac{1}{\lambda_j} \right)^{-1}.
\end{equation}
\paragraph{\textbf{2. Rejection failure}} \underline{The new invader is rejected by the system.}  The invader goes to extinction but does not change the resident system. First, we note that for the purpose that the resident system is unchanged, $\lambda_{k+1} < \lambda_k$. Since, if $\lambda_{k+1} > \lambda_k$, the species $k+1$ goes extinct, leading to the extinction of species $k$ as well, by Theorem \ref{thm:main-converge}.

By the threshold in Theorem \ref{thm:main-converge}, we can prove that, the condition for the invader not to invade the resident system with $k$ species is that $\lambda_{k+1}<Q^*_{k}$. Indeed, if $\lambda_k > \lambda_{k+1} > Q^*_{k}$, this leads to the community augmentation.
\\ \\
It cannot happen that the new species invades the available persistent system, removes old species then itself goes extinct as well. Indeed, assume that new species $k+1$ invades and removes species $k'+1$ to $k$, then it goes extinct. This implies that at the long run behavior, there exists $i < k'+1 \leq k$ such that the set of persistent species is $\{1,\dots,i\}$. We deduce that $Q_i^* > \max\{\lambda_{i+1},\lambda_{k+1}\}$, which is absurd, since $k$ is the first index satisfying $Q^*_k > \lambda_{k+1}$ by Proposition \ref{rmk:prop-k}.
\\

Hence, the equivalent condition for the rejection failure is that $\lambda_{k+1}<Q^*_{k}$, hence the invader invasion fitness is lower than the mean invasion fitness of the resident system.

\paragraph{\textbf{3. Species removal and replacement}} The invader succeeds at the expense of one or more species in the system. Depending on where $\lambda_{k+1}$ falls in relation to the different ranges,
there is a different number of species to replace.
There may be two cases for new species $k+1$ replacing old species from $k' + 1$ to $k$ with $k' + 1 \leq k$: i) the least fit species of the new system is the recent invader, or ii) the least-fit species of the new system is the old $k'$-ranked species.

Note that, by Theorem \ref{thm:main-converge}, since species $k+1$ persists and species $k'+1$ goes extinct, then $\lambda_{k+1} > \lambda_{k'+1}$.
\\ \\
\textbf{(i)} \underline{The invader becomes the new least fit species of the new system.} By Theorem \ref{thm:main-converge}, the equivalence condition for new species $k+1$ replacing old species from $k' + 1$ to $k$ with $k' + 1 \leq k$ 
is that:
\begin{equation*}
\lambda_{k'} >
\lambda_{k+1}
>
Q^{*,\text{new}}_{k'+1} 
>
\lambda_{k' + 1},
\quad
\text{where }\;
Q^{*,\text{new}}_{k'+1} 
=
\frac{k'}{\sum_{j=1}^{k'} \frac{1}{\lambda_j} + \frac{1}{\lambda_{k+1}}}
\,.
\end{equation*}
By direct computations, we have that
\begin{equation*}
\lambda_{k+1}
>
Q^{*,\text{new}}_{k'+1} 
\Longleftrightarrow
\lambda_{k+1}
>
\frac{k'}{\sum_{j=1}^{k'} \frac{1}{\lambda_j} + \frac{1}{\lambda_{k+1}}}
\Longleftrightarrow
\lambda_{k+ 1} > Q^*_{k'}
\end{equation*}
and
\begin{equation} \label{eq:replace-bigger-condition}
\begin{aligned}
Q^{*,\text{new}}_{k'+1} 
>
\lambda_{k' + 1}
\Longleftrightarrow
\frac{k'}{\sum_{j=1}^{k'} \frac{1}{\lambda_j} + \frac{1}{\lambda_{k+1}}} 
>
\lambda_{k' + 1}
\Longleftrightarrow
\lambda_{k+1} 
> 
\left( \frac{k'}{\lambda_{k' + 1}} - \frac{k'-1}{Q^*_{k'}}  \right)^{-1}.
\end{aligned}
\end{equation}
On the other hand, by direct computations, we also get that
\begin{equation*}
Q^*_{k'} < \left( \frac{k'}{\lambda_{k' + 1}} - \frac{k'-1}{Q^*_{k'}}  \right)^{-1}
\Longleftrightarrow
\frac{Q^*_{k'}}{\lambda_{k' + 1}} < 1,
\end{equation*}
which holds true since $Q^*_i < \lambda_{i+1}$ for all $i < k$. This implies that
\begin{equation*}
\lambda_{k+1} 
> 
\left( \frac{k'}{\lambda_{k' + 1}} - \frac{k'-1}{Q^*_{k'}}  \right)^{-1}
\Longrightarrow
\lambda_{k+ 1} > Q^*_{k'}.
\end{equation*}
For the possible choice of a new species with $\lambda_{k+1}$, we need the equivalent condition that
\begin{equation*}
\lambda_{k'} > \left( \frac{k'}{\lambda_{k' + 1}} - \frac{k'-1}{Q^*_{k'}}  \right)^{-1}
\Longleftrightarrow
\frac{k'}{\lambda_{k'+1}}
>
\sum_{j=1}^{k'}\frac{1}{\lambda_j} + \frac{1}{\lambda_{k'}}.
\end{equation*}
Therefore, in this case, the equivalence condition for new species $k+1$ replacing old species from $k' + 1$ to $k$ with $k' + 1 \leq k$ and $\lambda_{k+1} < \lambda_{k'}$ is that
\begin{equation*}
\frac{k'}{\lambda_{k'+1}}
>
\sum_{j=1}^{k'}\frac{1}{\lambda_j} + \frac{1}{\lambda_{k'}}
\quad
\text{and }\;
\lambda_{k+1} 
> 
\left( \frac{k'}{\lambda_{k' + 1}} - \frac{k'-1}{Q^*_{k'}}  \right)^{-1}.
\end{equation*}
\textbf{(ii)} \underline{The least fit species of new system is the old species $k'$.} By Theorem \ref{thm:main-converge}, the equivalence condition for new species $k+1$ replacing old species from $k' + 1$ to $k$ with $k' + 1 \leq k$ 
is that:
\begin{equation*}
\lambda_{k+1}
>
\lambda_{k'} >
Q^{*,\text{new}}_{k'+1} 
>
\lambda_{k' + 1},
\quad
\text{where }\;
Q^{*,\text{new}}_{k'+1} 
=
\frac{k'}{\sum_{j=1}^{k'} \frac{1}{\lambda_j} + \frac{1}{\lambda_{k+1}}}
\,.
\end{equation*}
We reuse the condition in \eqref{eq:replace-bigger-condition} and compute directly the other condition that
\begin{equation*}
\begin{aligned}
Q^{*,\text{new}}_{k'+1} 
<
\lambda_{k'}
\Longleftrightarrow
\frac{k'}{\sum_{j=1}^{k'} \frac{1}{\lambda_j} + \frac{1}{\lambda_{k+1}}} 
<
\lambda_{k'}
\Longleftrightarrow
\lambda_{k+1} 
< 
\left( \frac{k'}{\lambda_{k'}} - \frac{k'-1}{Q^*_{k'}}  \right)^{-1}.
\end{aligned}
\end{equation*}
Thus, in this case, the condition for new species $\lambda_{k+1}$ is read as
\begin{equation*}
\max\left\{\lambda_{k'}, \left( \frac{k'}{\lambda_{k' + 1}} - \frac{k'-1}{Q^*_{k'}}  \right)^{-1}\right\}
<
\lambda_{k+1}
<
\left( \frac{k'}{\lambda_{k'}} - \frac{k'-1}{Q^*_{k'}}  \right)^{-1}
.
\end{equation*}
For the possible choice of a new species with $\lambda_{k+1}$, we need the equivalent condition that
\begin{equation*}
\left( \frac{k'}{\lambda_{k'}} - \frac{k'-1}{Q^*_{k'}}  \right)^{-1} > \lambda_{k'}
\Longleftrightarrow
\lambda_{k'} > Q^*_{k'-1},
\end{equation*}
which always holds because $k$ is the first index satisfying $Q^*_{k} > \lambda_{k+1}$ by Proposition \ref{rmk:prop-k}, and
\begin{equation*}
\left( \frac{k'}{\lambda_{k'}} - \frac{k'-1}{Q^*_{k'}}  \right)^{-1}
>
\left( \frac{k'}{\lambda_{k' + 1}} - \frac{k'-1}{Q^*_{k'}}  \right)^{-1}
\Longleftrightarrow
\lambda_{k'} > \lambda_{k' + 1},
\end{equation*}
which always holds as well.
\\ \\
To improve the understanding of these conditions in a concise manner, we provide their summary in Table \ref{tab:summary}, illustrating the exact mathematical principles dividing the various invasion regimes.

\begin{table}[htb!]
\centering
\setlength{\tabcolsep}{6pt}
\renewcommand{\arraystretch}{1.2}
\begin{tabular}{C{2.8cm}| m{4cm}| m{8.5cm}}
\toprule
\multicolumn{1}{c|}{\textbf{Invasion outcome}} & \multicolumn{1}{c|}{\textbf{Condition}} & \multicolumn{1}{c}{\textbf{Location of invader trait} }\\
\midrule
\multirow{2}{*}[-4ex]{\textbf{Augmentation}}
& $Q_k^*<\lambda_{k+1}<\lambda_k$
& \adjustbox{valign=c}{
\begin{tikzpicture}
  \draw[thick] (0,0) -- (8,0); 
  \foreach \x/\label in {0.0/\textbf{0}, 1/\textbf{1}} {
    \pgfmathsetmacro{\xpos}{8*\x}
    \draw[very thick] (\xpos,0.15) -- (\xpos,-0.15);
    \node[below] at (\xpos,-0.15) {\label};}

\foreach \x/\label in {
0.12/$\boldsymbol{Q_{k}^*}$,  0.28/$\boldsymbol{\lambda_{k}}$} 
  {
    \pgfmathsetmacro{\xpos}{10*\x}
    \draw[very thick, black] (\xpos,0.15) -- (\xpos,-0.15);
    \node[below, black] at (\xpos,-0.15) {\label};
  }
  
  
  \foreach \x/\label in 
  {0.2/$\boldsymbol{\lambda_{k+1}}$}
  {
    \pgfmathsetmacro{\xpos}{10*\x}
    \draw[very thick, red] (\xpos,0.15) -- (\xpos,-0.15);
    \node[above, red] at (\xpos,0.15) {\label};
}
  \node at (7,-0.5) {\textcolor{black}{\textbf{\dots}}}; 
; 
\end{tikzpicture}}
\\
\cmidrule(lr){2-3} 
& \makecell[l]{$\lambda_k<\lambda_{k+1}<U_k$,\\ $U_k:= \left(\frac{k}{\lambda_k} -\frac{k-1}{Q^*_k}\right)^{-1}$}
& 
\adjustbox{valign=c}{
\begin{tikzpicture}
  \draw[thick] (0,0) -- (8,0); 
  \foreach \x/\label in {0.0/\textbf{0}, 1/\textbf{1}} {
    \pgfmathsetmacro{\xpos}{8*\x}
    \draw[very thick] (\xpos,0.15) -- (\xpos,-0.15);
    \node[below] at (\xpos,-0.15) {\label};}

\foreach \x/\label in {
0.12/$\boldsymbol{Q_{k}^*}$,  0.28/$\boldsymbol{\lambda_{k}}$,
0.42/$\boldsymbol{U_{k}}$} 
  {
    \pgfmathsetmacro{\xpos}{10*\x}
    \draw[very thick, black] (\xpos,0.15) -- (\xpos,-0.15);
    \node[below, black] at (\xpos,-0.15) {\label};
  }
  
  
  \foreach \x/\label in 
  {0.35/$\boldsymbol{\lambda_{k+1}}$}
  {
    \pgfmathsetmacro{\xpos}{10*\x}
    \draw[very thick, red] (\xpos,0.15) -- (\xpos,-0.15);
    \node[above, red] at (\xpos,0.15) {\label};
}
  \node at (7,-0.5) {\textcolor{black}{\textbf{\dots}}}; 
; 
\end{tikzpicture}}
\\
\midrule
%
%
%
%
\multirow{1}{*}[-1.5ex]{\textbf{Rejection}}
& $\lambda_{k+1}\le Q_k^*$
& 
\adjustbox{valign=c}{
\begin{tikzpicture}
  \draw[thick] (0,0) -- (8,0); 
  \foreach \x/\label in {0.0/\textbf{0}, 1/\textbf{1}} {
    \pgfmathsetmacro{\xpos}{8*\x}
    \draw[very thick] (\xpos,0.15) -- (\xpos,-0.15);
    \node[below] at (\xpos,-0.15) {\label};}

\foreach \x/\label in {
0.12/$\boldsymbol{Q_{k}^*}$,  0.28/$\boldsymbol{\lambda_{k}}$} 
  {
    \pgfmathsetmacro{\xpos}{10*\x}
    \draw[very thick, black] (\xpos,0.15) -- (\xpos,-0.15);
    \node[below, black] at (\xpos,-0.15) {\label};
  }
  
  
  \foreach \x/\label in 
  {0.05/$\boldsymbol{\lambda_{k+1}}$}
  {
    \pgfmathsetmacro{\xpos}{10*\x}
    \draw[very thick, red] (\xpos,0.15) -- (\xpos,-0.15);
    \node[above, red] at (\xpos,0.15) {\label};
}
  \node at (7,-0.5) {\textcolor{black}{\textbf{\dots}}}; 
; 
\end{tikzpicture}}
%
\\
\midrule
\multirow{2}{*}[-4ex]{\makecell[l]{\textbf{Replacement} \\ (from $k'+1$ to $k$)}}
& 
\makecell[l]
{$\frac{k'}{\lambda_{k'+1}}
>
\frac{k'-1}{Q^*_{k'}} + \frac{1}{\lambda_{k'}}$,
\\
$ V_{k'} < \lambda_{k+1} 
<\lambda_{k'}$ \\
$V_{k'}:=
\frac{1}{\frac{k'}{\lambda_{k' + 1}} - \frac{k'-1}{Q^*_{k'}}} $}
& 
\adjustbox{valign=c}{
\begin{tikzpicture}
  \draw[thick] (0,0) -- (8,0); 
  \foreach \x/\label in {0.0/\textbf{0}, 1/\textbf{1}} {
    \pgfmathsetmacro{\xpos}{8*\x}
    \draw[very thick] (\xpos,0.15) -- (\xpos,-0.15);
    \node[below] at (\xpos,-0.15) {\label};}

\foreach \x/\label in {
0.28/$\boldsymbol{\lambda_{k}}$,
0.35/$\boldsymbol{V_{k'}}$,
0.45/$\boldsymbol{\lambda_{k'}}$} 
  {
    \pgfmathsetmacro{\xpos}{10*\x}
    \draw[very thick, black] (\xpos,0.15) -- (\xpos,-0.15);
    \node[below, black] at (\xpos,-0.15) {\label};
  }
  
  
  \foreach \x/\label in 
  {0.42/$\boldsymbol{\lambda_{k+1}}$}
  {
    \pgfmathsetmacro{\xpos}{10*\x}
    \draw[very thick, red] (\xpos,0.15) -- (\xpos,-0.15);
    \node[above, red] at (\xpos,0.15) {\label};
}
  \node at (7,-0.5) {\textcolor{black}{\textbf{\dots}}}; 
; 
\end{tikzpicture}}
\\
\cmidrule(lr){2-3}
&
\makecell[l]{
$W_{k'} < \lambda_{k+1}
<
U_{k'}$
\\
$W_{k'} :=
\max\left\{\lambda_{k'},  V_{k'} \right\}
$}
& 
\adjustbox{valign=c}{
\begin{tikzpicture}
  \draw[thick] (0,0) -- (8,0); 
  \foreach \x/\label in {0.0/\textbf{0}, 1/\textbf{1}} {
    \pgfmathsetmacro{\xpos}{8*\x}
    \draw[very thick] (\xpos,0.15) -- (\xpos,-0.15);
    \node[below] at (\xpos,-0.15) {\label};}

\foreach \x/\label in {
0.28/$\boldsymbol{\lambda_{k}}$,
0.35/$\boldsymbol{V_{k'}}$,
0.45/$\boldsymbol{\lambda_{k'}}$,
0.6/$\boldsymbol{U_{k'}}$ } 
  {
    \pgfmathsetmacro{\xpos}{10*\x}
    \draw[very thick, black] (\xpos,0.15) -- (\xpos,-0.15);
    \node[below, black] at (\xpos,-0.15) {\label};
  }
  
  
  \foreach \x/\label in 
  {0.52/$\boldsymbol{\lambda_{k+1}}$}
  {
    \pgfmathsetmacro{\xpos}{10*\x}
    \draw[very thick, red] (\xpos,0.15) -- (\xpos,-0.15);
    \node[above, red] at (\xpos,0.15) {\label};
}
  \node at (7,-0.5) {\textcolor{black}{\textbf{\dots}}}; 
; 
\end{tikzpicture}}
\\
\bottomrule
\end{tabular}
\caption{\textbf{Conditions for the outcome of an invader species $k+1$ in a system with $k$ coexisting species.} Visual summary of Section \ref{sec:sub:condition-invader}. 
Red ticks mark the trait of the invader $\lambda_{k+1}$ on the $[0,1]$ axis relative to the residents $\{\lambda_j\}_{j \leq k}$ 
and thresholds: mean invasion fitness of the resident system $Q_k^* = \frac{k-1}{\sum_{j=1}^k \lambda_j^{-1}}$ (left invasion threshold), $U_k = \left(\frac{k}{\lambda_k} - \sum_{j=1}^k \lambda_j^{-1}\right)^{-1}$ (upper augmentation threshold), and for replacement with a focal resident $k' \leq k$,
$V_{k'} = \left(\frac{k}{\lambda_{k'}} - \frac{k-1}{Q_{k'}^*}\right)^{-1}$,  $W_{k'} = \max\{\lambda_{k'}, V_{k'}\}$.
Each row indicates one of 3 main outcomes, whether: i) the system is {augmented} with the invader, ii) the invader is rejected, or iii) the invader {replaces} one or more residents 
according to the interval in which $\lambda_{k+1}$ falls.
}
 \label{tab:summary}
\end{table}

\subsection{Niche saturation via subsequent invasions of less fit species}
Here we focus on conditions allowing for community augmentation via subsequent invasions of less fit species (case 1,i in Section 4.1). New invasions in this case become more and more difficult. Random-trait invaders become harder and harder to be added to the system. Indeed, from \eqref{eq:aug-c1} and \eqref{eq:aug-c2},
we prove that 
$$\lim\left(\lambda_k - Q^*_k\right) 
=
0,
\quad
\text{as }\;
k \to \infty,
$$ 
indicating that the width of the permissible interval for invader traits, allowing successful invasion, becomes smaller as species system size increses.
First, we claim that $Q^*_{k+1} > Q^*_k$. Indeed, we have that
\begin{equation*}
\left\{
\begin{aligned}
&
Q^*_{k+1} &&= \frac{k}{\sum_{j=1}^{k+1}\frac{1}{\lambda_j}},\quad
z_i^{*,k+1} = 1-\frac{Q^*_{k+1}}{\lambda_i},\forall i=1,2,\dots,k+1
\,,\\
&
Q^*_k &&= \frac{k-1}{\sum_{j=1}^{k}\frac{1}{\lambda_j}},\quad
z_i^{*,k} = 1-\frac{Q^*_k}{\lambda_i},\forall i=1,2,\dots,k\,.
\end{aligned}
\right.
\end{equation*}
Thus, there exists $1\leq i\leq k$ such that $z_i^{*,k+1} < z_i^{*,k}$ i.e. $1-\frac{Q^*_{k+1}}{\lambda_i} < 1-\frac{Q^*_k}{\lambda_i}$, which implies $Q^*_k < Q^*_{k+1}$.
Hence, $\left(Q_{k+1}^*, \lambda_{k+1}\right) \subset \left(Q_k^*, \lambda_k\right)$.

Now, for all $k \geq 1$, let $A_k = \sum_{j=1}^k \frac{1}{\lambda_j}$ and $H_k = \frac{k}{A_k}$ be the harmonic mean. 
By monotone boundedness, $\lambda_k \downarrow L \ge 0$. By direct computation, we can check that
$$
\lim_{k \to \infty} \frac{k - (k-1)}{A_k - A_{k-1}}
= \lim_{k \to \infty} \frac{1}{\lambda_k^{-1}} = L.
$$
Since the sequence $\left\{A_k\right\}_{k\geq 1}$ is increasing to infinity, Stolz–Cesàro's theorem gives
$$\lim_{k \to \infty} H_k
=
\lim_{k \to \infty}\frac{k}{A_k}
= L.$$
But $Q^*_k = \frac{k-1}{A_k} = \frac{k-1}{k} H_k \to L$ as $k\to \infty$, hence $\lambda_k - Q^*_k \to L - L = 0$.


Roughly speaking, the invasion of a new weaker species into the available system becomes increasingly difficult as the number of coexisting species increases. This niche saturation phenomenon becomes the underlying reason for the saturating number of coexisting species with pool size $N$, seen in Figure \ref{fig:nbar}.

\section{Linking the invader-driven replicator to biological systems}
Until now, we have studied in detail key stability and coexistence principles in a replicator system characterized by invader-driven invasion fitness matrix between interacting species. Next, we link this type of replicator to biological contexts of relevance for modeling and empirical testing. 
\subsection{An equivalent Lotka-Volterra multispecies model}
According to \cite[Theorem 7.5.1]{Hofbauer_Sigmund_1998}, there exists a differentiable, invertible map, mapping the orbits of the replicator equation
$$
\frac{d}{dt}{z}_i = z_i \bigl( \left(\Lambda \mathbf{z}\right)_i - \mathbf{z}^T\Lambda \mathbf{z} \bigr),\qquad i=1,\ldots,N, 
$$
onto the orbits of the Lotka--Volterra equation
\begin{equation} \label{eq:LV}
\frac{d}{dt}{y}_i = y_i \left( r_i + \sum_{j=1}^{N-1} a_{ij} y_j \right), 
\qquad i=1,\ldots,N-1,    
\end{equation}
where $r_i = \lambda_i^N - \lambda_N^N$ and $a_{ij} = \lambda_i^j - \lambda_N^j$.

In the particular case, when we have the invader-driven replicator system with $\lambda_{i}^j = \lambda_i$ for all $1\leq j \leq N$, $j\neq i$ and $\lambda_i^i = 0$ for all $i$, we can obtain it as a transformation from the Lotka-Volterra system with the following definition:
\begin{equation*}
r_i = \lambda_i^N - \lambda_N^N = \lambda_i, 
\qquad 
a_{ij} = \lambda_i^j - \lambda_N^j =
\begin{cases}
-\lambda_N, & j=i, \\
\lambda_i - \lambda_N, & j \neq i,
\end{cases}
\qquad (i,j \leq N-1).
\end{equation*}
Concretely, the corresponding Lotka--Volterra system turns out to be of a special kind, and is explicitly written as:
\begin{equation} \label{eq:GLV-invader}
\frac{d}{dt}{y}_i 
= y_i \left( \lambda_i (1-y_i) + \left(\lambda_i - \lambda_N\right) \sum_{j=1}^{N} y_j \right),
\quad
i=1,\ldots,N-1.
\end{equation}
Without loss of generality we can take the $N-$th species to be the one with the largest $\lambda_i$, hence making the LV model an explicit competition model where the negative effect from the total abundance of all species is felt by species $i$ with coefficient $\lambda_i-\lambda_N.$
\\
Another remark is that \eqref{eq:GLV-invader} has a ``rank-one mean-field" interaction matrix:
$$
A = \underbrace{(\lambda_i - \lambda_N)_i \, \mathbf{1}^\top}_{\text{rank one}} - \lambda_N I,
$$
so species interact only through the total biomass 
$\sum_j y_j$. Thus all the results proven in this paper can be applied to such specific Lotka-Volterra model.

\subsection{\added{A model of competition of self-reproducing macromolecules}}
\added{
Beyond theoretical population dynamics, this class of replicator equations holds profound implications for biochemistry, particularly in modeling the origin of life and the \textit{in-vitro} evolution of self-replicating macromolecules \citep{paczko2024stochastic}. As demonstrated by \cite{epstein1979competitive} and \cite{hofbauer1981competition}, this mathematical framework accurately captures the selection kinetics of competing polynucleotides and catalytic macromolecular networks under constant organization constraints. Indeed, a generalized formulation of the competitive dynamics studied in these foundational works can be written as
$$\frac{dz_i}{dt } = z_i\left(G_i(z_i) - \frac{\phi}{c}\right), \quad 
\text{with } \phi = \sum_{j=1}^N z_j G_j(z_j) 
,
\quad
i=1,\dots,N \;\text{ and } c>0,$$
where the $G_i$ are strictly decreasing functions from $\mathbb{R}^+$ into $\mathbb{R}$ representing density-dependent growth rates. Under these conditions, the orbits trivially converge to the invariant concentration simplex
$$\Delta^c
=
\left\{
(x_1,\ldots,x_N)\in \mathbb{R}^N
\;:\;
x_i \ge 0,\;
\sum_{i=1}^N x_i = c,\;
i=1,\ldots,N
\right\}.$$
In aligning our theoretical baseline with this biochemical context, we acknowledge that the fundamental global attraction principle is a well-established result. Specifically, \cite{epstein1979competitive} established the kinetic boundaries dictating competitive exclusion versus coexistence, while \cite{hofbauer1981competition} provided rigorous proofs of global stability for such mass-action systems using the Lyapunov function mentioned in \cite[Exercise 19.3.5]{Hofbauer_Sigmund_1998}. In this article, the convergence of kinetically inferior replicators toward a global attractor (as detailed in Section \ref{sec:main-result-dynamics}) directly builds upon these classical models.
\\
However, rather than merely proving the existence of a global attractor, our present work advances this framework by explicitly constructing a deterministic algorithm to identify the exact state of global attraction for this specific invader-driven replicator equation. Additionally, we extend the classical analysis to include the case of non-positive invasion fitnesses, which is thoroughly examined in Appendix \ref{supp:negative}. Furthermore, leveraging our proposed algorithm and stability criteria, we analytically compute the expected number of asymptotically coexisting species in the long-term limit and mathematically characterize the dynamical outcome when a novel invader is introduced into an established system.}

\subsection{Multi-strain SIS system: trait assumptions for invader-driven replicator}
Next, we show how a replicator system, which is invader-driven, can be obtained from a real biological or epidemiological system bottom-up. Recall the SIS model with coinfection and multiple interacting species in \citep{madec2020predicting} and \citep{le2023quasi}. This model, under strain similarity, leads to replicator dynamics of strain frequencies circulating in the host population. The pairwise invasion fitness in such bottom-up derived replicator is an explicit function (weighted average) of trait asymmetries between species: 
\begin{equation}
\begin{aligned}
\lambda_i^j 
= 
\theta_1 
\left(b_i - b_j\right) 
+ \theta_2 
\left(-\nu_i + \nu_j\right) 
+ \theta_3 
\left(-u_{ij} - u_{ji} + 2u_{jj}\right) 
+ \theta_4 
\left(\omega_{ij}^i - \omega_{ji}^j\right) 
+ \theta_5 
\left(\mu \left(\alpha_{ji} - \alpha_{ij}\right) + \alpha_{ji} - \alpha_{jj}\right),
\end{aligned}
\end{equation}
for all $i,j$,
with conventions and notations of parameters defining species in our model are in \cite[Table 1]{le2023quasi}. We describe 3 straightforward cases of epidemiological trait assumptions for the multi-strain system, mapping to invader-driven replicator dynamics.
\begin{itemize}
\item[i)]Recalling Table 2 in \cite{madec2020predicting}, we can have an invader-driven invasion fitness matrix, when our system has variation in only co-colonization susceptibilities between species $k_{ij} = k +\epsilon\alpha_{ij}$. Indeed, if the co-colonization interaction matrix $K$ satisfies a special kind of symmetry: $$k_{ij}= \added{k_{ji}} \cancel{k_{ij}} = k_{ii} + k_{jj} - k, \quad \forall 1\leq i,j \leq N,$$
then the invasion fitness matrix is invader-driven with $\lambda_i^j = \alpha_{ii}$, for all $i,j$. 
\item[ii)] Another case can be constructed for a system with perturbation only in co-colonization interactions \cite{madec2020predicting}, yielding invader-driven replicator dynamics between species.
Indeed, let the target invader-driven row values be $\{\lambda_i\}_{1\leq i\leq N}$ so that $\lambda_{ij} = \lambda_i$ for $i \neq j$ (and $\lambda_{ii} = 0$ comes for free).
Take any small $\mu > 0$-which is the ratio of single to co-infection of the system, and pick any constant $c \in \mathbb{R}$. By defining the $\alpha$-matrix as follows:
$$
\alpha_{ij} :=
\begin{cases}
\frac{c - \lambda_i}{\mu}
, & i \neq j, \\
(\mu+1)\frac{c - \lambda_i}{\mu} - c, & i = j,
\end{cases}
$$
%
then, for $i \neq j$, we obtain the pairwise-invasion fitness
$$
\lambda_{i}^j 
= 
\mu(\alpha_{ji} - \alpha_{ij}) 
+ \alpha_{ji} 
- \alpha_{jj} 
= 
(\lambda_i - \lambda_j) 
+\frac{c - \lambda_j}{\mu}
- (\mu+1)\frac{c - \lambda_j}{\mu} + c
=  \lambda_i,
$$
and for $i = j$, $\lambda_{ii} 
= 0$ trivially.  
Hence $\Lambda$ is invader-driven with each row $i$ equal to $\lambda_i$ for all $1\leq i\leq N$.
\item[iii)] 
In the other epidemiological case when species vary only in clearance rates of co-infection \citep{le2023quasi}, we can also obtain an invader-driven invasion fitness matrix. Since coinfection includes same-strain and ordered mixed-strain combinations, there are four such traits defining each pair of species.
Indeed, we construct an example for co-infection clearance rates $\gamma_{ij}$'s, recalling from the similarity assumption that $\gamma_{ij} = \gamma +\epsilon u_{ij}$ for all $i,j$. 

Let the desired invader-driven row values be $\{\lambda_{i}\}_{i=1}^N$ with 
$\lambda_{i}^i = \lambda_i$ for $j \ne i$ and $\lambda_{i}^i = 0$ (for all $i$).
Pick any $c \in \mathbb{R}$ and define a symmetric $U = (u_{ij})_{1\leq i,j\leq N}$ such that:
$$
u_{ii} = c + \frac{\lambda_1 - \lambda_i}{2}, 
\qquad 
u_{ij} = u_{ji} = c + \frac{\lambda_1 - (\lambda_i + \lambda_j)}{2} \quad (i \ne j).
$$
Then, for $i \ne j$,
$$
\lambda_{i}^j = -u_{ij} - u_{ji} + 2u_{jj} 
= -2\left(c + \frac{\lambda_1 - \lambda_i - \lambda_j}{2}\right) 
+ 2\left(c + \frac{\lambda_1 - \lambda_j}{2}\right) = \lambda_i,
$$
and $\lambda_{i}^i = -2u_{ii} + 2u_{ii} = 0$.
Thus $\Lambda$ is invader-driven with rows $i$ constant equal to $\lambda_i$\footnote{Actually, we note that non-symmetric solutions also exist: choose any $u_{ij}$ and set 
$u_{ji} = 2u_{jj} - \lambda_i - u_{ij}$ for $i \ne j$ with $u_{ii}$ as above.}. 

All these examples show that an invader-driven replicator system be constituted bottom-up or can arise when modelling frequencies of co-circulating species with coinfection interactions. Their analysis can be made simpler by harnessing the analytical results on coexistence provided here.  
\end{itemize}

\subsection{Testing whether an ecological multispecies system is invader-driven}\label{sec:sub-test}

When considering a single ecological site and assuming a steady state scenario between all species present, by the uniqueness result and the analytically explicit form of the equilibrium $z_i$, we can always find a $\lambda_i$ $(i=1,\cdots,n)$ vector parameterization that satisfies the equilibrium expression (Eq.\ref{invader-driven}), and fits each species frequency as the equilibrium. However, a deeper and subtler way to test is by comparing different ecological sites where some of the same species are present.

In such context, to test the invader-driven replicator hypothesis implies interpreting species frequency data from multiple settings as coming from an equilibrium, and considering the sets they share in common, and checking whether they verify mathematical constraints (Eq.\ref{eq:test-linear}). We did this in an epidemiological context, namely in a \textit{Streptococcus pneumoniae} serotype colonization dataset \citep{dekaj2022pneumococcus,le2025inference}, where we have 5 geographic settings: Denmark \cite{Harboe2012pneumococcal}, Iran \cite{Tabatabaei2014multiplex}, Brazil \cite{rodrigues2017pneumococcal}, Nepal \cite{kandasamy2015multi}, and Mozambique \cite{adebanjo2018pneumococcal}. These settings report different number and identities of serotypes, as well as their frequencies. However they share seven serotypes in common, these being 14, 19A, 19F, 3, 4, 6A, 6B. Denoting their identities by $i \in {14,19A,19F,3,4,6B},$ we expect under the assumption of invader-driven $\Lambda$ underlying the serotype dynamics in such system, that all these seven serotypes should have the same invader-driven fitness $\lambda_i$ everywhere, independently of the context. 
\\

We tested for this hypothesis using the common serotypes, by considering pairwise the values of $(1-z_i)^{-1}$ in each setting, and under the equilibrium assumption. We took one country as a reference, without loss of generality, here Denmark, and examined whether there was a linear relationship between $\frac{1}{1-z_i}$ reported in Denmark and in the other countries. Under the invader-driven $\Lambda$ hypothesis, from equation \ref{invader-driven}, assuming equality of $\lambda_i$, we should expect a linear relationship to hold between observations in setting $s$ and $k$, for all common serotypes $i$: 

\begin{equation} \label{eq:test-linear}
\frac{1-z_i^{(\text{site k})}}{1-z_i^{(\text{site s})}}
=
\frac{Q^{(\text{site k})}}{Q^{(\text{site s})}}\,,
\end{equation}
with the slope of the relation being the inverse ratio of mean invasion fitness in each site at equilibrium, and the intercept of the regression expected to be zero. This theoretical property is illustrated in Figure \ref{fig:linear_example}, which continues the example in Figure \ref{fig:dynamics}.
\begin{figure}[htb!]
    \centering
    \includegraphics[width=0.38\linewidth]{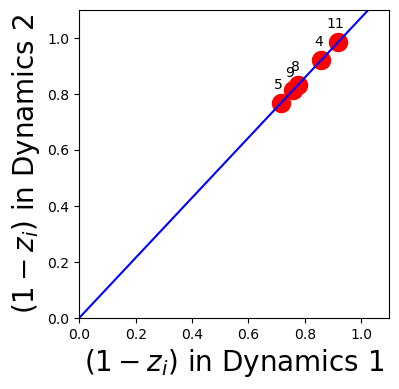}
    \caption{\added{\textbf{Expected linearity of $1-z_i$ equilibrium frequencies for species shared between distinct sites.} Respectively, the $x$- and $y$-axes represent $1 - z_i^*$ for all persistent species $i$ at equilibrium in Dynamics 1 (top panel) and in Dynamics 2 (bottom panel) in Figures \ref{fig:dynamics}c and \ref{fig:dynamics}f). Surviving species are highlighted in red and labeled by their respective identities. Species shared between the two systems (specifically, species 4, 5, 8, 9, and 11) exhibit the predicted linear relationship. 
    }}
    \label{fig:linear_example}
\end{figure}

The correlation results for the common serotypes shared among \textit{all} settings are summarized in Figure \ref{fig:7strains-test-linear}, and for serotypes shared two by two, in Figure \ref{fig:correlationPlots} taking another site as reference. From these data, we do not find unambiguous support for the linear relationships expected under the invader-driven $\Lambda$ hypothesis (see Supplementary Material S.2.2). Indeed, different more general invasion matrix alternatives were explored and fitted to this dataset in a previous study \citep{le2025inference}. 
\begin{figure}[htb!]
    \centering
    \includegraphics[width = 0.7 \textwidth]{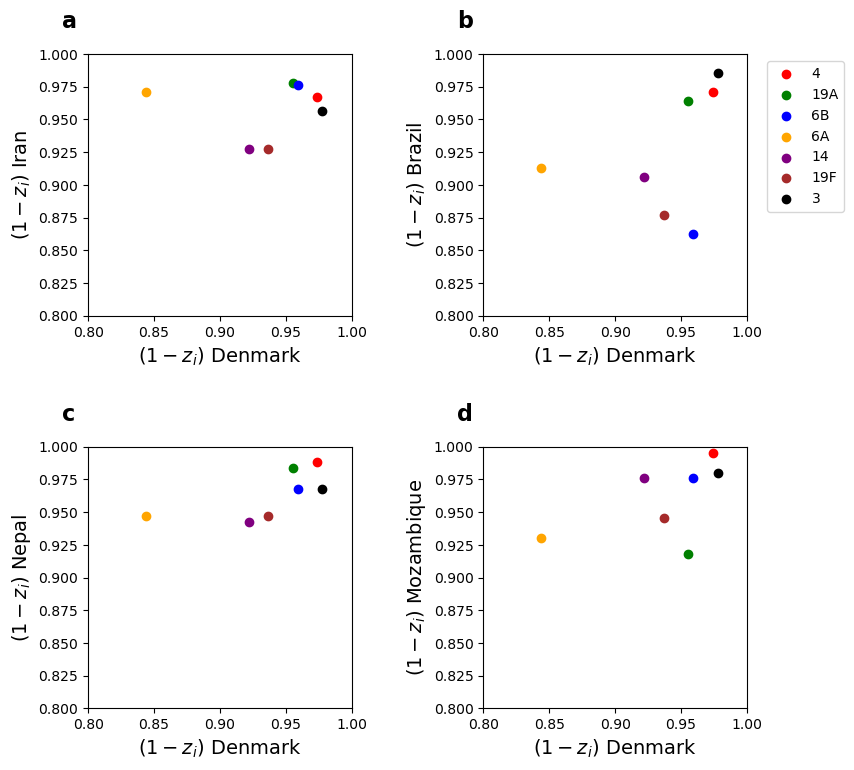}
    \caption{\textbf{Testing for invader-driven $\Lambda$ matrix in a multi-strain respiratory pathogen: \textit{Streptococcus pneumoniae}.} Common serotypes in pairs of countries/epidemiological settings (data analyzed in \citep{le2025inference}) are shown by the colored circles. 
    Here we take Denmark as the reference setting, and compare the reported $1-z_i$ in each other setting to the value reported in Denmark. If the invader-driven $\Lambda$ matrix hypothesis is correct, first there should be a significant linearity in each pair of observations, and secondly there should be a proportionality constant linking these slopes among settings. Contrary to this hypothesis, we find little correlation between $1-z_i$ values in these pairs of countries, the exact  Pearson correlations of $1 -z_i$, $i$ in the set of common species, being: $\text{corr}\left(\text{Denmark, Iran}\right) = 0.049$, $\text{corr}\left(\text{Denmark, Brazil}\right) = 0.366$, $\text{corr}\left(\text{Denmark, Nepal}\right) = 0.667$, $\text{corr}\left(\text{Denmark, Mozambique}\right) = 0.530$. To gain more statistical power, we explored separate sets of overlapping serotypes taking another setting as reference: Nepal (Figure\ref{fig:correlationPlots}).}
    \label{fig:7strains-test-linear}
\end{figure}

However, even if these linearity relationships for overlapping serotypes were to be satisfied, the invader-driven coexistence within replicator dynamics implies other constraints also for the magnitudes of these slopes and the intercepts (see Supplementary Material \ref{supp:test}), and potentially for the sets of site-specific serotypes. Thus, the linearity test should only be seen as a first requirement, warranting more quantitative verifications. 

Overall, we believe that the results exposed throughout in this paper provide a solid mathematical foundation to build upon a formal statistical framework to match real data from a wide range of ecological systems with the theoretical principles of this type of replicator equation.

\section{Discussion}

The replicator equation is central in evolutionary game theory with many applications in economics, biology, epidemiology and social sciences, and a long history of study and mathematical interest\citep{hofbauer1981competition, Hofbauer_Sigmund_1998, cressman2014replicator}. There are several special cases of replicators studied in the literature, including symmetric, zero-sum and other games \citep{diederich1989replicators,  yoshino2008rank,chawanya2002large, galla2006random}, and often with different methods, ranging from dynamical systems to statistical physics. Here, we have focused and studied in detail another special case of the replicator equation, which when expressed in terms of pairwise invasion fitnesses (zeros on the diagonal of the payoff matrix) has the structure of no variation along columns, but only along rows. This means that species are defined by their active invasiveness propensity in the system, and the result of each pairwise interaction depends only on the donor and not on the recipient species, that is why, we term this type of matrix \textit{invader-driven}. \added{Interestingly, this replicator dynamics has also been studied in classical early work on replicators with competitive coexistence of self-replicating macromolecules \citep{epstein1979competitive, hofbauer1981competition} but recent uses and applications of it have been more limited \citep{paczko2024stochastic}.} 
As studied briefly in our \replaced{recent}{previous} papers \citep{madec2020predicting,gjini2023towards}, this type of invader-driven matrix is prone to generate coexistence between multiple species, and is typically characterized by \added{very fast selection} and a quick rise of mean systemic invasion resistance when species get selected starting from equal initial frequencies; indeed the fastest rise when compared to other matrix structures such as the symmetric or random type, with entries in the same range \citep{gjini2023towards}. 

In this paper, we have gone deeper into all the stability, invasibility and coexistence regimes in this type of replicator system.
We have shown that when positive and negative fitness species co-occur initially, only a subset of the positive invaders may ultimately persist and coexist, and in a unique stable steady state. When the species pool consists entirely of negative invaders (all `losers'), alternative stable competitive exclusion states with a single species arise, and asymptotic dynamics depends on initial frequencies (see Supplementary material \ref{supp:negative} where we derive precise conditions). 

In the polymorphic steady state, - our main focus here - where only positive invaders coexist, equilibria are explicit, and species frequencies follow exactly the hierarchy of their invasiveness traits (Eq. \ref{invader-driven}). We have found a critical threshold that determines the number and identities of coexisting species $n$, starting with a random pool of $N$, - a threshold that is independent of any probability distribution where $\lambda_i$ generate from. Algorithmic evaluation of this criterion is very quick and allows for high-dimensional species pools to be efficiently analyzed. We have provided an expression for the probability of $k$ species coexistence in the random uniform distribution case $\mathbb{P}(n=k|N)$, and an approximation for the mean number of coexisting species $\mathbb{E}[n]$, increasing with pool size $\sim \sqrt{2}\sqrt{N}$. This result highlights the difficulty of obtaining large stable communities under this type of replicator, even when the species pool size is big.

\subsection*{Implications for invasion ecology}

Community ecology has a long-standing interest to predict biological invasions by applying modern niche concepts to invasive species and the communities they enter. The idea of ``niche opportunity" describes the conditions - such as resources, enemies, and environmental factors - that promote invasions, especially when these vary over time and space \citep{shea2002community}. While the efforts to integrate theories and reconcile conflicting data are ongoing for many years, e.g. on the role of species diversity, invader and resident traits, and more recently on interaction network properties \citep{case1990invasion,kolar2001progress,kurkjian2021impact,hui2019invade,hui2021trait}, formal and robust mathematical descriptions of such processes, enabling prediction, are scarce in the literature. 

Moreover, one of the outstanding questions in the field remains how can invasion performance and invasibility be explicitly incorporated into stability criteria for open, adaptive ecological networks with realistic topologies \citep{hui2019invade}, \added{and what are the rules underlying species packing in open communities \citep{chiwira2026species}.}

Here, we contribute to fill this gap, focusing on the invasion dynamics in a special kind of ecological system (invader-driven replicator dynamics). This is one topology of a mutual invasion network underlying interaction non-specificity on the receiving partner, that typically leads to a unique stable coexistence steady state. For this topology, we have characterized quantitatively in detail, the key balance between current mean invasion fitness of the system, and constituent species traits, and the next species (new invader) traits, allowing for different invasion outcomes. Our results highlight the importance and nonlinear effect of history and contingency in system assembly for these types of replicators, and confirm that new invasions, whenever they happen (Table \ref{tab:summary}), bringing the system to a new equilibrium, increase the mean invasion fitness of the system, thereby reducing invasion chances for future invaders.

Achieving the desired synergies or mitigation effects with biological invasions requires a clear understanding of which invasion outcomes occur and how they develop over time. While one requirement for this is reliable data on the status and trends of alien species \citep{seebens2025biological}, the other requirement is the existence of robust mathematical and quantitative frameworks. With the theoretical proofs provided here, we hope to contribute to a more comprehensive and robust assessment of invasion dynamics in a class of important ecological systems, and to directions for effective responses. We have also matched our analytic expressions with numerical examples and illustrations to show the validity of our approach and to improve understanding of simple low-dimensional cases.

\subsection*{\added{Limitations and} outlook}

\added{Although most of our results are independent of the specific distribution where the invasion growth rates of the species originate (Sections 2, 4, 5), some specific calculations such as probability of n-species coexistence were calculated for a particular distribution, namely the uniform (Section 3). Certainly in the future, it will be interesting to study other probability distributions and test how their properties will affect the distribution and statistics of the number of coexisting species for this type of replicator. } 

\added{Another direction that we did not pursue was to compute exactly the probabilities of certain invasion outcomes, whose criteria are derived in Section 4. This can be an interesting avenue for further extension and computation.} 

Finally, with an applications perspective in mind, we have pointed to several links of this type of special replicator with concrete biological systems. Indeed the replicator equation written in terms of pairwise invasion fitnesses (studied here) can be obtained under different models with the invasion matrix coefficients explicit functions of underlying trait asymmetries between the `player' species \citep{madec2020predicting,le2023quasi,gjini2023towards}. \added{Beyond the original old known application for self-replicating macromolecules \citep{epstein1979competitive,paczko2024stochastic},} here, in particular, we provided the equivalence links with rank-one Lotka-Volterra systems, and with a specific epidemiological multi-strain SIS scenario with co-colonization/co-infection \citep{madec2020predicting,le2023quasi}. 

In the latter scenario, we have shown that this type of invader-driven replicator dynamics, leading to multi-strain coexistence, requires epidemiologically (bottom-up) the presence of special \added{trait-mediated} interactions between strains, namely either via pairwise susceptibilities to co-colonization, or variability between strains in co-colonization durations, resonating with our previous results \citep{le2022disentangling}. This finding supports co-colonization/co-infection as a coexistence mechanism in multi-strain endemic systems and infectious disease epidemiology, and here advances the key principles for such coexistence within the invader-driven replicator dynamics. 

These examples highlight that the replicator equation with emergent net invasion fitness coefficients is well suited to examine the relative importance of underlying fundamental trait axes between species, condensing in a few parameters the key selective processes, and will enhance our ability to predict the outcome of invasions in diverse communities. The challenge remains translating mathematical results on the magnitudes of $\lambda$ coefficients to criteria and constraints on species raw life-history trait values and distributions. 

We have also applied the SIS-coinfection-replicator model to a currently important respiratory pathogen, namely to pneumococcus serotype frequencies from different settings \citep{dekaj2022pneumococcus,le2025inference}, and here we have illustrated with data how one could in principle test empirically for the invader-driven hypothesis in an ecological system. \added{As discussed in Section \ref{sec:sub-test} (see also \ref{supp:test}),} exposing the main expected axes of model-data convergence \added{under equilibrium assumptions (Eq. \ref{eq:test-linear})} sets the ground for more sophisticated statistical frameworks to be built upon these mathematical principles in the future. \added{Certainly the features of this replicator should inspire applications in other wide-ranging multispecies assembly and coexistence contexts, from epidemiology and ecology to biochemistry. }

\section*{Acknowledgement} We thank J. Tomás Lázaro for interesting discussions on several aspects of this work. The study received funding by the Portuguese Foundation for Science and Technology (FCT grant number 2022.03060.PTDC) and was partially supported by the European Union (Grant Agreement Number: 101080528— NOSEVAC). 

\section*{Data availability statement}
The data used for illustration in Section \ref{sec:sub-test}. can be found at this \href{https://github.com/lthminhthao/Invader-driven_ReplicatorSystem}{GitHub repository.}

\bibliographystyle{apalike}
\bibliography{article_refs}

\newpage

\begin{center}
\textbf{Supplementary material }\\
\title{Understanding the invader-driven replicator dynamics}
\end{center}
{Thi Minh Thao Le,}
{Marina Garcia-Romero, }
{João Duarte Alcântara Galvão, }
{Sten Madec, and}
{Erida Gjini}

\renewcommand\thefigure{S\arabic{figure}}\setcounter{figure}{0}
\renewcommand\thetable{S\arabic{table}}\setcounter{table}{0}
\renewcommand\thesection{S\arabic{section}}\setcounter{section}{0}
\renewcommand\thesubsection{S\arabic{subsection}}
\renewcommand{\theequation}{\thesection.\arabic{equation}}
\renewcommand{\thethm}{S\arabic{thm}}
\renewcommand{\theprop}{S\arabic{prop}}
\renewcommand{\thelmm}{S\arabic{lmm}}
\renewcommand{\thecor}{S\arabic{cor}}

\section{Negative invader-driven fitness case leads to competitive exclusion} \label{supp:negative}
In this section, we only consider a particular case of invader-driven fitness which has not considered in previous parts: all fitnesses are negative.

\begin{thm} \label{thm:neg-stable}
\added{
For any $N > 2$ and $(\lambda_1, \ldots, \lambda_N)$ with $\lambda_i < 0$ for all $i$, the dynamics \eqref{eq:repli_invader} converges to an equilibrium for every initial state. }
\begin{enumerate}
\item[i)] \added{Every monomorphic state is an asymptotically stable equilibrium.}
\item[ii)] \added{All non-monomorphic equilibria (including the unique interior equilibrium and face equilibria) are unstable.}
\end{enumerate}
	\deleted{For any $N \geq 1$ and $\left(\lambda_1,\lambda_2,\dots,\lambda_N\right)$ with $\lambda_{i} < 0$ for all $i \geq 1$,   there exists a global attractor of  \eqref{eq:repli_invader}, which is a boundary equilibrium}.
\end{thm}
\begin{proof}
\begin{change}
We prove that the invader driven dynamics converges to an equilibrium for any initial condition. Firstly, for any subset $S\subseteq\{1,\dots,N\}$, $|S|=m$ (i.e. $S$ has $m$ elements), we write
$F_S=\{z\in\Delta: z_i=0,\;\forall\, i\notin S\}$ for the
corresponding face, and $\mathrm{int}\,F_S$ for its relative interior.
\\
Now, similarly to the proof of Theorem \ref{thm:main-converge}, we take the potential 
\begin{equation*}
V\left(z\right)
:= 
\sum_{i=1}^{N} \left(\lambda_iz_i - \frac{\lambda_i}{2}z_i^2\right)
\quad
\Longrightarrow \,f_i\left(z\right):=\frac{\partial V}{\partial z_i} = \lambda_i\left(1-z_i\right)
\quad
\text{and} \quad
\dot V(z)
  = \sum_{i=1}^N z_i\bigl(f_i(z)-Q(z)\bigr)^2 \;\ge\; 0\,.
\end{equation*}
By \cite[Theorem 19.5.1]{Hofbauer_Sigmund_1998}, \eqref{eq:repli_invader} is a Shahshahani gradient on the simplex $\Delta=\{z\in \mathbb{R}^N:\, z_1+\dots+z_N = 1\}$ with the potential function $V$. This means that $V$ increases along every non-constant trajectory with equality if and only if
$f_i(z)=Q(z)$ for all $i$ with $z_i>0$,
i.e.\ if and only if $z$ is an equilibrium.

On each invariant set $F_S$, an interior equilibrium satisfies
$\lambda_i(1-z_i)=\nu_S$ for all $i\in S$, $\sum_{i\in S}z_i=1$.
This system has a unique solution, namely
$$
  z_i = 1-\frac{\nu_S}{\lambda_i},\qquad
  \nu_S = \frac{m-1}{\sum_{j\in S}1/\lambda_j},
$$
which lies in $\mathrm{int}\,F_S$ if and only if $0<z_i<1$ for all $i\in S$; otherwise $F_S$ has no interior equilibrium.
Since there are finitely many faces ($2^N$ subsets $S$) and at most one interior equilibrium per face, the total number of equilibria is finite leading to equilibria are isolated.

On the other hand, $\Delta$ is compact and $V$ is continuous and non-decreasing along any trajectory, hence $V(z(t))\to V^*$ for some~$V^*$.\end{change}

\added{By LaSalle's invariance principle, the $\omega$-limit set is contained in the largest invariant subset of $\{z\in \Delta:\;\dot V=0\}$, which equals the set of equilibria. Since equilibria are isolated, every trajectory converges to one equilibrium.}
\\ \\
\begin{change}
\textit{i)}
Without loss of generality, we denote the unique survivor by strain $1$, we have that $z_1^* = 1$, $Q^* = 0$, and $z_j^*=0$ for all $j \neq 1$. We recall the Jacobian at $e_1 = \left(1,0,\dots,0\right)$ as follows
\begin{equation*}
J(e_1) =
\begin{pmatrix}
	0 & -\lambda_2 & -\lambda_3 & \cdots & -\lambda_N \\
	0 & \lambda_2   & 0          & \cdots & 0 \\
	0 & 0           & \lambda_3  & \cdots & 0 \\
	\vdots & \vdots & \ddots     & \ddots & \vdots \\
	0 & 0 & 0 & \cdots & \lambda_N
\end{pmatrix}\,.
\end{equation*}
\deleted{which yields to our conclusion since $\lambda_j < 0$ for all $j \geq 1$.} \added{Note that, the dimension of the simplex $\Delta$ is $N-1$ then the spectrum of $J(e_1)$ restricted to $\Delta$ is all strictly negative since $\lambda_j < 0$ for all $j \geq 1$, which yields to our conclusion.}
\\ \\
\added{\textit{ii)} For this, we first claim that, if $z^*$ is a non-vertex equilibrium, then every neighborhood of $z^*$ in $\Delta$ contains at least one point $\bar{z}$ with $V(\bar{z}) > V(z^*)$.}

Indeed, since $z^*$ is not a vertex, it has at least two strictly positive components
$z_i^* > 0$ and $z_j^* > 0$ for some $i \neq j$. Consider $\bar{z} := \bar{z}(\delta)= z^* + \delta(e_i - e_j)$,
which stays in $\Delta$ for $|\delta|$ small enough.
The function $g(\delta) := V(\bar{z}(\delta))$ is strictly convex in $\delta$, since $\bar{z}(\delta)$ is affine in $\delta$ and $V$ is strictly convex on $\mathbb{R}^N$ (the Hessian $\nabla^2 V=-\mathrm{diag}(\lambda_1,\dots,\lambda_N)$ is positive definite because all $\lambda_i<0$).\\
Its derivative at $\delta=0$ is
$$
g'(0)
= \nabla V(z^*) \cdot (e_i - e_j)
= f_i(z^*) - f_j(z^*).
$$
Since $z^*$ is an equilibrium with $z_i^* > 0$ and $z_j^* > 0$, the equilibrium condition gives
$f_i(z^*) = Q(z^*) = f_j(z^*)$, so $g'(0)=0$.
A strictly convex function with zero derivative at $\delta=0$ has a strict minimum there, which implies
$$ V(\bar{z})=
V\bigl(z^* + \delta(e_i - e_j)\bigr)
= g(\delta) > g(0) = V(z^*)
\qquad \text{for all small } \delta \neq 0.
$$
For $|\delta|$ small enough, $\bar{z}$ lies in the given neighborhood of $z^*$, which proves the claim.
\\

\added{Now, we back to prove \textit{ii)}. Take any non-monomorphic equilibrium and choose $\epsilon > 0$ small enough that $B(z^*,\epsilon) \cap \Delta$ contains no other equilibrium, which is possible since equilibria are isolated. Take any $r > 0$. By strict convexity of $V$ and the fact that $z^*$ is not a vertex, $B(z^*,r) \cap \Delta$ contains a point $z_0$ with $V(z_0) > V(z^*)$. The trajectory from $z_0$ satisfies $V(z(t)) \geq V(z_0) > V(z^*)$ for all $t \geq 0$, since $\dot{V} \geq 0$. So $z(t)$ can never converge to $z^*$. By \textit{i)}, $z(t)$ converges to some other equilibrium, which lies outside $B(z^*,\epsilon)$, i.e. $z(t)$ eventually leaves $B(z^*,\epsilon)$.}
\end{change}

\end{proof}

\begin{prop} \label{prop:a.e.vertexconverge}
\added{
For all $\left(\lambda_1, \lambda_2,\dots,\lambda_N\right) \in (-\infty,0)^N$
outside a measure-zero exceptional set, almost every trajectory of \eqref{eq:repli_invader} in $\Delta$ converges to a monomorphic equilibrium.}
\end{prop}
\begin{change}
Initially, we need the following lemma.
\begin{lmm} \label{lmm:0-measure}
Let $(-\infty,0)^N$
be the parameter space of all-negative fitness vectors $\lambda=\left(\lambda_1, \lambda_2,\dots,\lambda_N\right)$. For a subset $S\subseteq \{1,\ldots,N\}$ with $|S|=m\geq 2$, define $\nu_S(\lambda)
=
\frac{m-1}{\sum_{j\in S}\frac{1}{\lambda_j}}$ and 
the exceptional set 
$$
E
=
\bigcup_{\substack{S\subseteq\{1,\ldots,N\}\\ |S|\geq 2}}
\;
\bigcup_{i\notin S}
E_{S,i},
\quad
\text{with }\;\;
E_{S,i}
=
\left\{\lambda\in (-\infty,0)^N:\;\lambda_i=\nu_S(\lambda)\right\}.
$$
Then $E$ has Lebesgue measure zero in $(-\infty,0)^N\subseteq\mathbb{R}^N$.    
\end{lmm}
\end{change}

\begin{proof}
\added{The index set of pairs $(S,i)$ with $|S|\geq 2$ and $i\notin S$ is finite since there are at most $2^N\cdot N$ such pairs, so it suffices to show each $E_{S,i}$ has measure zero.}

\added{Fix $S$ with $|S|=m\geq 2$ and $i\notin S$. On $(-\infty,0)^N$, the condition $\lambda_i=\nu_S(\lambda)$ is equivalent to
$\lambda_i\sum_{j\in S}\frac{1}{\lambda_j}
=
m-1$.
We define $g_{S,i}(\lambda)
=
\lambda_i\sum_{j\in S}\frac{1}{\lambda_j}
-
(m-1)$, which is a rational function, trivially not identically zero on $(-\infty,0)^N$, and we have that $E_{S,i}=\left\{\lambda\in (-\infty,0)^N:\;g_{S,i}(\lambda)=0\right\}$.}

\added{We recall the result, if $g:U\to\mathbb{R}$  is real-analytic on a connected open set $U\subseteq\mathbb{R}^N$
 and $g\not\equiv 0$,then  $\{g=0\}$ has Lebesgue measure zero. Note $(-\infty,0)^N$ is connected and open, so the lemma applies and we obtain $E_{S,i}=\{g_{S,i}=0\}$ has measure zero, leading to the conclusion of Lemma \ref{lmm:0-measure}.}

\end{proof}

\added{Now we go back to the proof of Proposition \ref{prop:a.e.vertexconverge}.}

\begin{proof}
\added{In this proof, we show that if $\left(\lambda_1, \lambda_2,\dots,\lambda_N\right) \in (-\infty,0)^N \setminus E$, in which $E$ is defined in Lemma \ref{lmm:0-measure}, the conclusion of Proposition \ref{prop:a.e.vertexconverge} holds.}

\added{Firstly, we note that $\Delta$ is an $(N-1)$-dimensional manifold with corners, embedded in the hyperplane
$H=\left\{ z \in \mathbb{R}^N:\;\sum_i z_i = 1
\right\}$.
The Lebesgue measure on $\Delta$ is the $(N-1)$-dimensional measure inherited from $H$. All dimensional and measure statements below are with respect to this $(N-1)$-dimensional structure.}

\begin{change}
Since every trajectory converges to some equilibrium, the set of equilibria is the disjoint union of vertices $\{e_1,\ldots,e_N\}$ and non-vertex equilibria $\{z^{(1)},\ldots,z^{(L)}\}$, where $L \leq 2^N - N$ is finite. So the $\Delta$ decomposes as
$$
\Delta
=
\left( \bigcup_{i=1}^N B_i \right)
\cup
\left( \bigcup_{\ell=1}^L C_\ell \right),
$$
where $B_i = \{z_0 \in \Delta : z(t;z_0) \to e_i\}$ is the basin of vertex $e_i$, and $C_\ell = \{z_0 \in \Delta : z(t;z_0) \to z^{(\ell)}\}$ is the basin of non-monomorphic equilibrium $z^{(\ell)}$. We show $\bigcup_\ell C_\ell$ has measure zero by showing that $\dim \bigcup_\ell C_\ell \leq N-2$. For this purpose, we apply the Stable Manifold Theorem, by computing the dimension of stable space of the linearization of the dynamics at any non-monomorphic equilibrium. 

We claim that, the linearization of the dynamics at  a non-monomorphic equilibrium $z^{(S)}\in \operatorname{int} F_S$  for some $S$ with $|S| = m \geq 2$, restricted to the $(N-1)$-dimensional $\Delta$,
\begin{enumerate}
\item[\textit{i)}] has $m-1 \geq 1$ strictly positive eigenvalues tangential\footnote{For a linear operator $A$ and an $A$-invariant subspace $W$, the tangential eigenvalues are the eigenvalues of the restriction operator $A|_W$, which govern the internal dynamics within $W$.
Conversely, the transverse eigenvalues are the eigenvalues of the induced quotient operator on the space $V/W$,
characterizing the stability and dynamics normal to the subspace.} to $F_S$, and,
\item[\textit{ii)}] has no zero eigenvalue.
\end{enumerate}
To prove \textit{i)}, let $J$ be the Jacobian of the restricted dynamics at $z^{(S)}$, acting on the  $(m-1)$-dimensional tangent space $T = \left\{\xi\in\mathbb{R}^S:\sum_{i\in S}\xi_i=0\right\}$.

Let $\eta\in T$ be an eigenvector with eigenvalue $\mu$, i.e. $ J\eta=\mu\eta$.
The Jacobian at equilibrium at $z^{(S)}$ (where $f_i\left(z^{(S)}\right) = Q\left(z^{(S)}\right)$ for all $i\in S$) is computed as
$$
J_{ij}\left(z^{(S)}\right)
=
z_i
\left(
\frac{\partial f_i}{\partial z_j}
-
f_j
-
\sum_{k=1}^N z_k
\frac{\partial f_k}{\partial z_j}
\right).
$$
By the Shahshahani metric, taking $\xi \in T$, we have that
$$
\langle \xi,J\eta\rangle_z
=
\sum_{i\in S}\frac{1}{z_i}\xi_i\eta_i
=
\sum_{\substack{i\in S, \\ 1\leq j \leq N}}
\left(
\frac{\partial f_i}{\partial z_j}
-
f_j
-
\sum_{k=1}^N z_k\frac{\partial f_k}{\partial z_j}
\right)
\xi_i\eta_j
=
\sum_{\substack{i\in S, \\ 1\leq j \leq N}}
\frac{\partial f_i}{\partial z_j}\,
\xi_i\eta_j,
$$
as $\sum_{i \in S} \xi_i = 0$.
Recalling that $ \frac{\partial f_i}{\partial z_j}
=
-\lambda_i\delta_{ij}
=
|\lambda_i|\delta_{ij}
$
and setting $\xi=\eta$ lead to
$$
\mu \|\eta\|_z^2
=
\langle \eta,J\eta\rangle_z
=
\sum_{i\in S} |\lambda_i|\,\eta_i^2
>
0
\Longrightarrow
\mu >0\,.
$$
Hence, every tangential eigenvalue is strictly positive, which leads to that the number of strictly positive eigenvalues is $m-1$. 
\end{change}
\\ \\
\added{To prove \textit{ii)}, we can compute explicitly, at a non-monomorphic equilibrium $z^{(S)}$ supported on $S$ (with $|S|=m\geq 2$), the transverse eigenvalues are
$\lambda_k-\nu_S$, for all $k\notin S$,
where $\nu_S=\frac{m-1}{\sum_{j\in S} 1/\lambda_j}$.
A zero eigenvalue occurs precisely when
$\lambda_i=\nu_S$ for some $i\notin S$, leading to $\left(\lambda_1,\lambda_2,\dots,\lambda_N\right) \in E$, which is absurd.}
\\ \\
\begin{change}
Since all of eigenvalues are nonzero, the basin $C_\ell$ equals the stable manifold, a smooth sub-manifold of dimension equal to the number of negative eigenvalues, hence $\dim C_\ell \leq N-m$,
by the Stable Manifold Theorem for smooth dynamical systems.
Hence, we deduce that
$$
\dim C_\ell \leq N-m \leq N-2 < N-1 = \dim \Delta,
$$
The basin of non-monomorphic equilibria is $\bigcup_{\ell=1}^{L} C_\ell$,
a finite union of measure-zero sets, hence has measure zero. Therefore, for almost every $z_0 \in \Delta$, the trajectory $z(t;z_0)$ converges to a monomorphic equilibrium.
\end{change}

\end{proof}

\subsection*{Dependence of winning species on initial conditions}
Now, we consider how initial conditions select the unique surviving species in the system. 
Firstly, we consider the particular case of 2-strain system and aim to find a criteria for initial value such that either strain is the unique survivor. \added{The proof of Theorem \ref{thm:neg-2strains} below is obvious so we skip it.}

\begin{thm} \label{thm:neg-2strains}
Given invader driven replicator system \eqref{eq:repli_invader} with $N=2$ and assume that $0 > \lambda_1 > \lambda_2$. Then there is a sharp threshold $$
z_1^{\dagger} := \frac{\lambda_1}{\lambda_1 + \lambda_2} \in (0,1)
$$
such that
$$
z_1(0)
\begin{cases}
	< z_1^{\dagger} \;\; \Rightarrow \;\; z(t) \to e_2 = (0,1), \\
	= z_1^{\dagger} \;\; \Rightarrow \;\; z(t) \equiv \left(z_1^{\dagger},\, 1 - z_1^{\dagger}\right), \\
	> z_1^{\dagger} \;\; \Rightarrow \;\; z(t) \to e_1 = (1,0),
\end{cases}
\; \text{ as }\; t \to \infty\,.
$$
\end{thm}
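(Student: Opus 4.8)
The plan is to reduce the two-strain system to a scalar ODE on the invariant interval $[0,1]$ and then read off the dynamics from the phase line. Since $z_2 = 1 - z_1$ on the simplex, I would write $z := z_1$ and compute the mean fitness directly as $Q = \lambda_1(1-z)z + \lambda_2 z(1-z) = (\lambda_1+\lambda_2)z(1-z)$. Substituting into \eqref{eq:repli_invader} and factoring out the common $z(1-z)$, I expect the single scalar equation
$$\dot z = z(1-z)\bigl(\lambda_1 - (\lambda_1+\lambda_2)z\bigr).$$
The faces $z=0$ and $z=1$ are invariant, so any trajectory started in $(0,1)$ stays there, and the whole problem collapses to studying the sign of this scalar field on $(0,1)$.

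Next I would locate and classify the equilibria. The interior zero of the right-hand side solves $\lambda_1 - (\lambda_1+\lambda_2)z = 0$, giving $z = \lambda_1/(\lambda_1+\lambda_2) = z_1^{\dagger}$; because $0 > \lambda_1 > \lambda_2$ we have $\lambda_1+\lambda_2 < 0$ and $z_1^{\dagger} = |\lambda_1|/(|\lambda_1|+|\lambda_2|) \in (0,1)$, confirming this equilibrium is genuinely interior. Writing $g(z) := \lambda_1 - (\lambda_1+\lambda_2)z$, the leading coefficient $-(\lambda_1+\lambda_2)$ is strictly positive, so $g$ is increasing with $g(0)=\lambda_1<0$ and $g(1)=-\lambda_2>0$; hence $g<0$ on $(0,z_1^{\dagger})$ and $g>0$ on $(z_1^{\dagger},1)$. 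Since the prefactor $z(1-z)$ is strictly positive on $(0,1)$, this pins down $\operatorname{sign}\dot z = \operatorname{sign} g(z)$ on the entire open interval.

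Finally I would conclude by a standard phase-line (monotone-convergence) argument. For $z(0) \in (0, z_1^{\dagger})$ we have $\dot z < 0$ throughout, so $z(t)$ is strictly decreasing and bounded below by $0$; its limit is a fixed point lying in $[0, z_1^{\dagger})$, which can only be $0$, so $z(t)\to 0$ and thus $z(t) \to e_2 = (0,1)$. Symmetrically, for $z(0) \in (z_1^{\dagger}, 1)$ we have $\dot z > 0$, so $z(t)$ increases to the only available fixed point, giving $z(t) \to e_1 = (1,0)$; and $z(0) = z_1^{\dagger}$ is itself an equilibrium, hence stationary. The only thing that requires care is the orientation of the flow—namely that the interior equilibrium is a repeller rather than an attractor—which hinges entirely on the sign $\lambda_1+\lambda_2<0$. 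This is precisely what makes $z_1^{\dagger}$ a separatrix, consistent with the stability of both boundary equilibria established in Theorem \ref{thm:neg-stable}.
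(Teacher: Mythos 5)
Your proof is correct and takes essentially the same route as the paper's: reduce to the scalar equation $\dot z = z(1-z)\bigl(\lambda_1 - (\lambda_1+\lambda_2)z\bigr)$ on the simplex, read off the sign of the factor $\lambda_1 - (\lambda_1+\lambda_2)z$ on either side of $z_1^{\dagger}$, and conclude by a monotone-convergence phase-line argument that trajectories must limit onto the appropriate boundary equilibrium. Your added details (the explicit check that $z_1^{\dagger}\in(0,1)$ under $\lambda_1+\lambda_2<0$, and the monotonicity of $g$) only make explicit what the paper leaves implicit.
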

\deleted{\textit{Proof.} When $N=2$, $z_2 = 1 - z_1$ and \eqref{eq:repli_invader} now reads}
$$
\cancel{\frac{d}{dt}{z}_1 = z_1 \left( \lambda_1 (1 - z_1) - Q \right), 
\qquad Q = (\lambda_1 + \lambda_2) z_1 (1 - z_1),}
$$
\deleted{hence becomes}
\deleted{\begin{equation} \label{eq:N2-neg-eq}
\cancel{\frac{d}{dt}{z}_1 = z_1 (1 - z_1) \left(\lambda_1 - (\lambda_1 + \lambda_2) z_1 \right).} 
\end{equation}}
\deleted{From \eqref{eq:N2-neg-eq}, $z_1(1-z_1) > 0$ on $(0,1)$, so 
$\operatorname{sign}(\frac{d}{dt}{z}_1) = \operatorname{sign}\left(\lambda_1 - (\lambda_1 + \lambda_2) z_1 \right)$.  
Hence}
\deleted{$$\cancel
{\frac{d}{dt}{z}_1 =
\begin{cases}
	< 0, & z_1 \in (0, z_1^{\dagger}), \\
	= 0, & z_1 = z_1^{\dagger}, \\
	> 0, & z_1 \in (z_1^{\dagger}, 1).
\end{cases}}
$$}
\deleted{Therefore any solution with $z_1(0) > z_1^{\dagger}$ is monotone increasing, bounded above by $1$, 
so $z_1(t) \to L \in [z_1^{\dagger}, 1]$. If $L < 1$ then by continuity $\frac{d}{dt}{z}_1(L) > 0$, 
contradicting convergence; thus $L = 1$ and $z(t) \to e_1$.  
\\
Similarly, if $z_1(0) < z_1^{\dagger}$, then $z_1(t)$ is strictly decreasing, bounded below by $0$, 
and the same argument forces $z_1(t) \to 0$, i.e.\ $z(t) \to e_2$.  
\\
If $z_1(0) = z_1^{\dagger}$, then $\frac{d}{dt}{z}_1 = 0$ by \eqref{eq:N2-neg-eq}, so the trajectory stays at the interior unstable equilibrium. \qed}  
%
%
%
%
%
Now we have the following sufficient condition of initial state in the general case $N$-strain such that the strain with largest fitness excludes all other.
\begin{prop}\label{prop:neg-z0}
Consider the invader driven replicator system \eqref{eq:repli_invader} and assume that $0 > \lambda_1 \added{>} \cancel{\geq} \lambda_2 \geq \dots \lambda_N$. Let
$$
	C := \left\{ z \in \Delta:=\left\{z_1 + \dots + z_N = 1\right\} :\; \frac{z_1}{z_j} {\geq} \frac{\lambda_1}{\lambda_j}  \ \text{for all } j \geq 1 \right\} 
	.
	$$
\added{Then for almost everywhere such $\left(\lambda_1,\lambda_2,\dots,\lambda_N\right)$, $z(t) \to e_1$ for almost everywhere $z(0) \in C$. }
\deleted{ If $z(0)\in C,
$ then $z(t)\to e_1$.}
\end{prop}
\begin{proof}
\added{According to Lemma \ref{lmm:0-measure} and Proposition \ref{prop:a.e.vertexconverge}, we only need to consider when $\left(\lambda_1,\lambda_2,\dots,\lambda_N\right) \in \left(-\infty, 0\right)^N\setminus E$, with $E$ defined in Lemma \ref{lmm:0-measure}.}
Setting $R_{ij}:= \ln\frac{z_i}{z_j}$ for all $i,j$. 
On the hypersurface $\frac{z_i}{z_j} = \frac{\lambda_i}{\lambda_j}$,  
	we have
\begin{equation} \label{eq:neg-Rdot}
\frac{d}{dt}{R}_{ij} 
= 
\lambda_i\left(1 - z_i\right) - \lambda_j\left(1- z_j\right)
=
\left(\lambda_i - \lambda_j\right)\left(1 - z_i - z_j\right).
\end{equation}
Indeed, $\frac{z_i}{z_j} = \frac{\lambda_i}{\lambda_j}$ is equivalent to $\lambda_j \left(z_i - z_j\right) = z_j\left(\lambda_i - \lambda_j \right)$ since $\lambda_j,\lambda_i < 0$, we obtain that
$$
\lambda_i z_i - \lambda_j z_j 
= 
(\lambda_i - \lambda_j)z_i  + \lambda_j (z_i -  z_j) 
=
(\lambda_i - \lambda_j)z_i  + z_j\left(\lambda_i - \lambda_j \right)
=
(\lambda_i - \lambda_j) (z_i + z_j).
$$
Now we prove that $C$ is forward invariant.
\\ \\
$\bullet$ If any third strain is present,
then $z_1(t ) + z_j(t ) < 1$ and \eqref{eq:neg-Rdot} yields $\frac{d}{dt}{R}_{1j}(t ) {>} 0$ on the hypersurface $\frac{z_1}{z_j} = \frac{\lambda_1}{\lambda_j}$.  
The trajectory cannot cross from $R_{1j} {\ge} \ln\frac{\lambda_{1}}{\lambda_j}$ (since $z(0) \in C$) to $R_{1j} {<} \ln\frac{\lambda_{1}}{\lambda_j}$.  Indeed, assuming the contradiction, that then there exists $t > 0$  such that $R_{1j}\left(t\right) {<} \ln\frac{\lambda_1}{\lambda_j}$. By the continuity of $R_{1j}$, we can assume that $t_0 = \inf\left\{t>0,\; R_{1j}(t) {<} \ln\frac{\lambda_1}{\lambda_j}\right\}$, this leads to $R_{1,j} = \ln\frac{\lambda_1}{\lambda_j}$, then $\frac{d}{dt}{R}_{1j}\left(t_0\right) > 0$. 

On the other hand, since $R_{1j}\left(t_0\right) = \ln\frac{\lambda_1}{\lambda_j}$ and the trajectory crosses from $R_{1j} {\ge} \ln\frac{\lambda_{1}}{\lambda_j}$ to $R_{1j} < \ln\frac{\lambda_{1}}{\lambda_j}$, then  $\frac{d}{dt}{R}_{1j}\left(t\right) < 0$ for all $t\in \left(t_0 -\epsilon, t_0 +\epsilon\right)$ for some $\epsilon >0$ small enough
because $\frac{d}{dt}{R}_i$ is continuous. This is absurd as we have $R_{1j} = \ln\frac{\lambda_1}{\lambda_j}$ always implies $\frac{d}{dt}{R}_{1j} > 0$.
\\
\\
$\bullet$ If $z_1(t ) + z_j(t ) = 1$ (the $\{1,j\}$–face), then \eqref{eq:neg-Rdot} gives $\frac{d}{dt}{R}_{1j}(t ) = 0$.  

On that 2–face, the Theorem \ref{thm:neg-2strains} leads to the exclusion of strain $j$.
\\ \\
Since $j$ was arbitrary, we have that
\begin{equation} \label{eq:neg-Rratio}
R_{1j}(t) \;\ge\; \ln\frac{\lambda_1}{\lambda_j} 
\qquad \forall\, t \ge 0,\ \forall\, j \ge 2,
\end{equation}
implying $C$ is forward invariant. 

On the other hand, the Proposition \ref{prop:a.e.vertexconverge} claim the exclusion of $N-1$ species \added{for almost everywhere initial state}. Moreover, if $z(t) \to e_j$ for some $j \neq 1$, then $R_{1j} \to -\infty$, a contradiction with \eqref{eq:neg-Rratio}. Therefore $z(t) \to e_1$ as $t \to \infty$ \added{for almost everywhere $z(0) \in C$}.

\end{proof}
Accordingly, we have the following result in a particular case.
\begin{cor} \label{cor:neg-1/N}
When initial states are the same in all species \added{with invasion fitnesses satisfying $0 > \lambda_1 =\dots = \lambda_p:=\lambda_{\max} > \lambda_{p+1} \geq \dots \lambda_N$}, \replaced{the set of survivors consists of the species with greatest invader fitness $\lambda_{\max}$ }{ the unique survivor is the strain with greatest invader fitness}.
\end{cor}

\begin{proof}
\added{Firsly, we prove that, only top-fitness species can survive. Indeed, since $z(0) = \left(\frac{1}{N},\frac{1}{N},\dots, \frac{1}{N}\right) \in C$, by the forward-invariance established in the proof of Proposition \ref{prop:neg-z0}, $z_1(t)\ge z_j(t)$ for all $t\ge 0$ and all $j>p$.}

\added{On the other hand, the trajectory converges to an equilibrium $z^*$ by Theorem \ref{thm:neg-stable}. Passing the inequality to the limit, $z_1^*\ge z_j^*$ for all $j>p$, and in particular $z_1^*>0$ and $1\in S:=\{i:z_i^*>0\}$.}

\added{Suppose $j\in S$ with $j > p$, the equilibrium condition gives $\lambda_1(1-z_1^*)=\lambda_j(1-z_j^*)$. But $z_1^*\ge z_j^*$ gives $1-z_1^*\le 1-z_j^*$, and $\lambda_1>\lambda_j$ gives $\lambda_1(1-z_1^*) \neq \lambda_j(1-z_j^*)$, a contradiction. Hence $j \notin S$ for all $j > p$.}
\\ \\
\added{Secondly, we prove that the surviving species are in equal proportion. Take any two species $i,m\in\{1,\ldots,p\}$, so $\lambda_i=\lambda_m=\lambda_{\max}$. Consider
$R_{im}(t)=\ln \frac{z_i(t)}{z_m(t)}$ and we have that}
$$
\begin{aligned}
\frac{d}{dt} R_{im}
=
\lambda_i(1-z_i)-\lambda_m(1-z_m)
&=
\lambda_{\max}\bigl[(1-z_i)-(1-z_m)\bigr]
=
\lambda_{\max}(z_m-z_i)
\\
&
=
\lambda_{\max}z_m\left(1- \frac{z_i}{z_m}\right)
=
\lambda_{\max}z_m(1-e^{R_{im}})
,
\end{aligned}
$$
so
$$
\frac{d}{dt} R_{im}
=
\lambda_{\max}z_m(1-e^{R_{im}}).
$$
\added{It is trivial that $R_{im}=0$ is an equilibrium of this equation and recalling $R_{im}(0)=0$ since $z_i(0)=z_m(0)=1/N$.
By uniqueness of solutions of the ODE (the right-hand side is smooth in $R_{im}$ along the trajectory, as $z_m(t)>0$ on the interior), the constant solution $R_{im}(t)\equiv 0$ is the only solution with $R_{im}(0)=0$. Therefore}
$$
z_i(t)=z_m(t)
\qquad
\text{for all } t\ge 0,\ \text{for every pair } i,m\in\{1,\ldots,p\},
$$
yielding to the conclusion of Corollary \ref{cor:neg-1/N}.

\end{proof}

Moreover, from the proof of Proposition \ref{prop:neg-z0}, we also deduce the following necessary condition.
\begin{cor}(Necessary initial dominance to beat a fitter species)
\deleted{In the invader driven replicator dynamics with all negative fitnesses $\left(\lambda_1,\lambda_2,\dots,\lambda_N\right)$, if the unique survivor $i > 1$ is not the species with greatest invader fitness, then $\frac{z_j(0)}{z_i(0)} < \frac{\lambda_j}{\lambda_i}$ for all $j < i$. }
\added{ Fix $\lambda\notin E$, $E$ is defined in Lemma \ref{lmm:0-measure}, with $0>\lambda_1\ge\cdots\ge\lambda_N$. If a trajectory converges to a monomorphic equilibrium $e_i$ with $i\neq 1$, then $z(0)\notin C$; equivalently, there exists $j$ with
$\frac{z_j(0)}{z_1(0)}>\frac{\lambda_j}{\lambda_1}$.}
\end{cor}

To conclude this section, we have the following simulation for a general case, showing that we can have all alternatives of unique survivor.
\begin{figure}[htb!]
\centering
\includegraphics[width=0.65\linewidth]{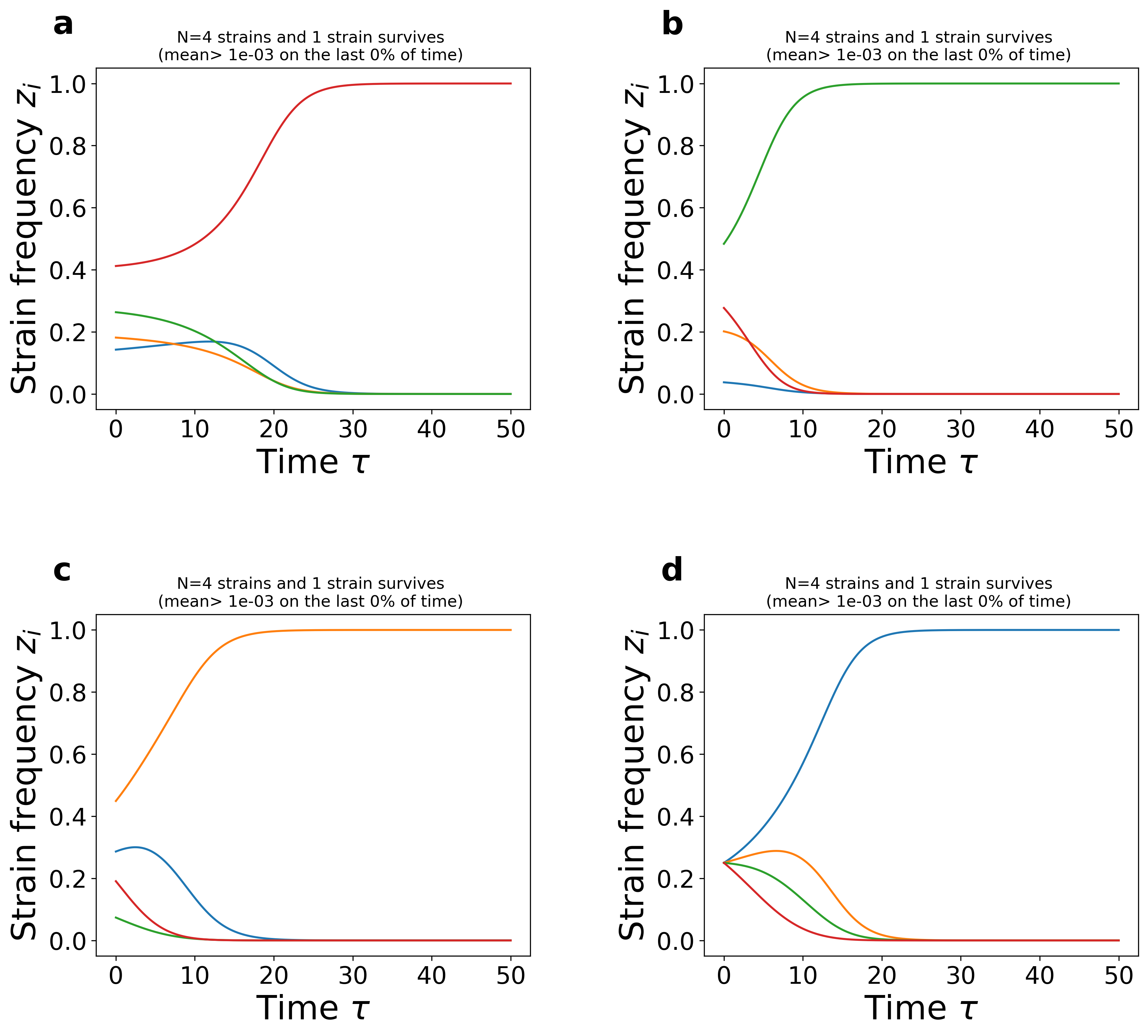}
\caption{\textbf{Initial conditions select the boundary attractor (winning species) in the negative invader-driven replicator system.} We simulate the dynamics of invader-driven replicator with $N=4$ and completely negative invasion fitness vector $\left(\lambda_1, \lambda_2, \lambda_3, \lambda_4\right)=(-0.4,-0.45,-0.5,-0.6)$. Panels (a–d) use different initial states $z(0)$. In the case of all negative fitness $\lambda_i<0$, the system is multistable: each run converges to a different boundary equilibrium $e_i$ (a unique survivor), showing that the surviving species depends on the initial composition.}
\label{fig:negative}
\end{figure}

\subsection*{Extension to the Non-Positive Invasion Fitness Case}
\added{In this subsection, we consider the more general case in which at least one fitness value equals 0. Recalling the definition of the simplex $\Delta$ in Proposition \ref{prop:neg-z0}, we obtain the following result.}
\begin{thm}
\added{Consider \eqref{eq:repli_invader} of $N \geq 1$ species and $0= \lambda_1 = \dots = \lambda_p > \lambda_{p+1} > \dots > \lambda_N$ . Assume that $\mathbf{z}(0)$ belongs to the interior of $\Delta$, i.e.: $z_i(0)>0$ for each $i\in\{1,\cdots,N\}$.
Denote $\Gamma_p = \left\{ z \in \Delta:\,  z_j = 0,\forall j>p \right\}$. 
\\
Then all the points on $\Gamma_p$ are equilibrium and the $\mathbf{z}(t)$ converges to a point in $\Gamma_p$ which depends on the initial state.}
\end{thm}
\begin{proof}
\added{ Set $\tilde
V(z) := \sum_{i=p+1}^N z_i$, implying $0\leq \tilde V(z) \leq 1$, then we have}
$$
\added{\frac{d}{dt}{\tilde V}(z) = \sum_{i=p+1}^N \frac{d}{dt}{z}_i 
= 
\sum_{i=p+1}^N z_i \left(\lambda_i (1-z_i) -Q(z)\right) 
= Q(z) (1- \tilde V(z))
\leq 0,} $$
\added{since}
$$\added{Q(z) = \sum_{j=p+1}^N \lambda_j(1-z_j)z_j \leq 0\,.}
$$
\added{Define  $\Omega := \left\{z \in \Delta : \frac{d}{dt}{\tilde V}(z) = 0\right\}=Q^{-1}(0)\cup \bar{V}^{-1}(1)$. 
Let us note that $Q^{-1}(0)=\Gamma_p\cup \mathcal{M}_p$. Where $\mathcal{M}_p$ is the set of each monomorphic state $j$ for $j>p$.
Each monomorphic steady state $\mathbf{e}_j \in\mathcal{M}_p$  satisfies $\tilde{V}(\mathbf{e}_j)=1$.
It follows that 
$$\Omega=\Gamma_p\cup \bar{V}^{-1}(1).$$
Clearly both $\Gamma_p$ and $\bar{V}^{-1}(1)$ are invariant, then 
 by the Lassalle's Invariance Principle, for any initial condition, the $\omega$-limit set  belongs to $\Gamma_p\cup \bar{V}^{-1}(1)$ as $t \to \infty$ (In other words, $z(t)$ asymptotically approach  this set).
Now, since $\mathbf{z}(0)$ belongs to the interior of $\Delta$, we have $\tilde{V}(0)<1$. 
The computation above shows that $\tilde{V}$ is decreasing in $t$ then $\tilde{V}(t)\leq \tilde{V}(0)<1$ for each $t\geq 0$. This excludes the case $\tilde V (z) = 1$ and it follows that the $\omega$-limits set belongs to $\Gamma_p$.}
\\
\\
\added{
Finally, it is clear that any point of $\Gamma_p$ is a steady state and thus trivially reachable by the dynamics. More can be say: any point of $\Gamma_p$ may be reach from the interior of $\Delta$. Indeed, take arbitrarily $i,m \leq p$ and consider the ratio $\frac{z_i(t )}{z_m(t )}$. It comes}
$$
\added{\frac{d}{dt } \left( \frac{z_i}{z_m} \right) 
 = 
 \frac{\frac{d}{dt}{z}_i z_m - z_i \frac{d}{dt}{z}_m}{z_m^2}
= 
\frac{(-z_i Q (z)) z_m - z_i (-z_m Q(z) )}{z_m^2}
= 
\frac{-z_i z_m Q(z) + z_i z_m Q(z)}{z_m^2} = 0\,.}
$$
\added{Combining with the previous arguments, 
as $t  \to \infty$, $z(t ) \to z^* \in \Gamma_p$ satisfying $\frac{z^*_i}{z^*_m} = \frac{z_i(0)}{z_m(0)}$ for all $ 1\leq i,m \leq p$, which end the proof.}

\end{proof}

\section{Simulations and numerical tests}
\subsection{Probability calculations for the number of coexisting species $n$ for different species pool size $N$}
In Figure \ref{fig:10strains_probs} \added{of the main text}, the explicit probabilities are computed by the formula in Theorem \ref{thm:prob} by Monte-Carlo method and are as follows.
\begin{align*}
	N=10,\; k=2:  &\quad P \approx 0.006568 \;\; \pm\; 0.000013 \quad (95\% \ \text{CI}) \\
	N=10,\; k=3:  &\quad P \approx 0.148881 \;\; \pm\; 0.000455 \quad (95\% \ \text{CI}) \\
	N=10,\; k=4:  &\quad P \approx 0.392741 \;\; \pm\; 0.002090 \quad (95\% \ \text{CI}) \\
	N=10,\; k=5:  &\quad P \approx 0.319676 \;\; \pm\; 0.003288 \quad (95\% \ \text{CI}) \\
	N=10,\; k=6:  &\quad P \approx 0.111410 \;\; \pm\; 0.002456 \quad (95\% \ \text{CI}) \\
	N=10,\; k=7:  &\quad P \approx 0.018318 \;\; \pm\; 0.000496 \quad (95\% \ \text{CI}) \\
	N=10,\; k=8:  &\quad P \approx 0.001573 \;\; \pm\; 0.000208 \quad (95\% \ \text{CI}) \\
	N=10,\; k=9:  &\quad P \approx 0.000067 \;\; \pm\; 0.000023 \quad (95\% \ \text{CI}) \\
	N=10,\; k=10: &\quad P \approx 0.000002 \;\; \pm\; 0.000002 \quad (95\% \ \text{CI}) \,.\\
\end{align*}
For another example, we performed another simulation with $N=20$.
\begin{figure}[htb!]
\centering
\includegraphics[width=1\linewidth]{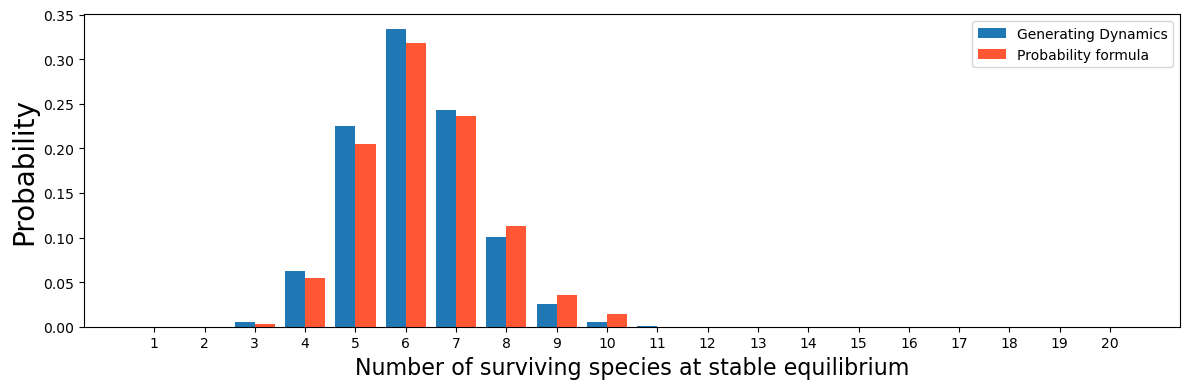}
\caption{\textbf{ Probability mass function of the number $k$ of surviving species at the unique 
	stable equilibrium of the invader-driven replicator \eqref{eq:repli_invader} with $N = 20$ and i.i.d. invader fitnesses
	$\lambda_i \sim \mathcal{U}[0,1]$.}
	Blue bars: empirical frequencies from 10,000 independent ODE runs started in the simplex interior 
	(survival counted when $z_i(T) > 10^{-4}$).
	Orange bars: probabilities obtained by Monte-Carlo evaluation of the Theorem \ref{thm:prob} integral formula, 
	reported with 95$\%$ confidence intervals.
	The two estimates agree similar probabilities of the target events.}
\label{fig:20strains_probs}
\end{figure}
Both Figures \ref{fig:10strains_probs} and \ref{fig:20strains_probs} confirm the Theorem \ref{thm:prob}, and are consistent with Figure \ref{fig:nbar} presented in the main text.

\subsection{Testing whether an ecological multispecies system is invader-driven}\label{supp:test}
\paragraph{\textbf{Increasing statistical power}} We have another hypothesis-testing for the data in Section \ref{sec:sub-test}, using condition \eqref{eq:test-linear} for invader-driven replicator system, taking the Nepal as a reference site, because it has the largest number of serotypes and thus shares more serotypes with any other site in the dataset.
\begin{figure}[b!]
    \centering
    \includegraphics[width = 0.45 \textwidth]{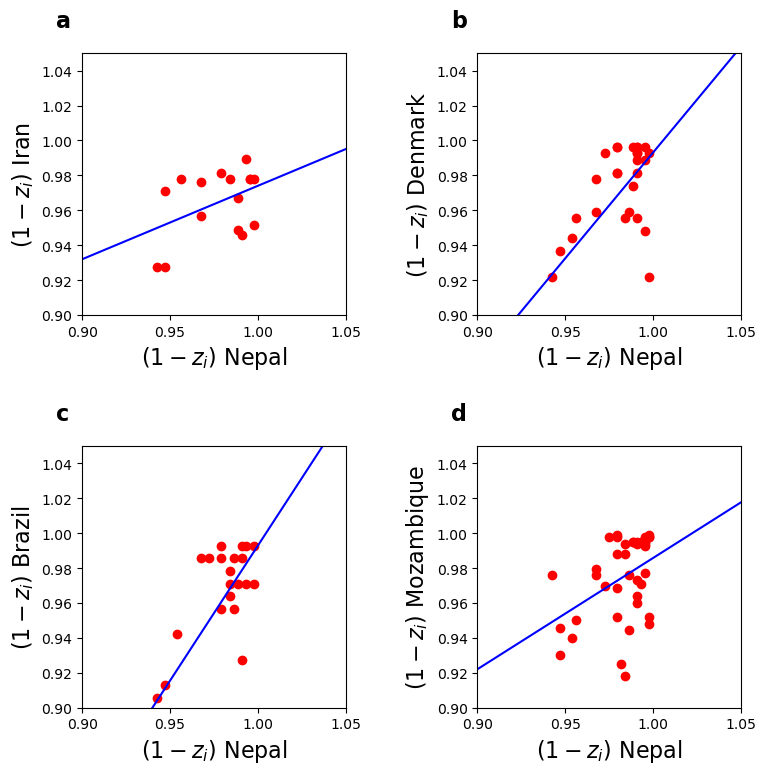}
    \caption{\textbf{Testing for invader-driven $\Lambda$ matrix in \textit{Streptococcus pneumoniae} with another country as reference.} Common serotypes in pairs of countries/epidemiological settings (data analyzed in \citep{le2025inference}) are shown by the red circles. In blue lines we show the corresponding linear fit plots for these comparisons. We take Nepal as the reference setting here because it has the higher number of serotypes, and compare the reported $1-z_i$ in each other setting to the value reported in Nepal (sets of overlapping $i$ can be different in each pair of sites). If the invader-driven $\Lambda$ matrix hypothesis is correct, there should be a proportionality constant linking these observations for common serotypes occurring in both settings. 
We do the test for these pairs: (Nepal, Iran), (Nepal, Denmark), (Nepal, Brazil) and (Nepal, Mozambique). The $p$-values for each linear regression are: a) $ 0.092$, b) $0.0008$, c) $0.0003$, and d) $0.0086$. The corresponding slopes are: $0.4206$, $1.2124$, $1.5472$ and $0.6373$. Even though three regressions seem significant, the standard deviations around these slopes are rather large, in particular in a, b, c, d respectively: $0.2323$, $0.3183$, $0.3629$ and $0.2296$, and the $R^2$ values modest: $0.1897$, $ 0.3581$, $0.4640$ and $0.1723$.
}
\label{fig:correlationPlots}
\end{figure}


Noting the $p$-values for the regression slopes lends support to the linearity hypothesis, in 3 out of 4 combinations, in line with invader-driven replicator expectations for $\left(1-z_i\right)$ in pairs of sites at equilibrium.

\paragraph{\textbf{Slope magnitudes between sites are not independent}} 
However, there is another subtle requirement involved in hypothesis-testing, using condition \eqref{eq:test-linear} for invader-driven replicator system, between trios of equilibrium sites. Their shared serotype frequency slopes are non-independent. Let us take an example the three pairs (Denmark, Mozambique), (Denmark, Brazil), and (Brazil, Mozambique).
\begin{figure}[htb!]
    \centering
    \includegraphics[width = 0.9 \textwidth]{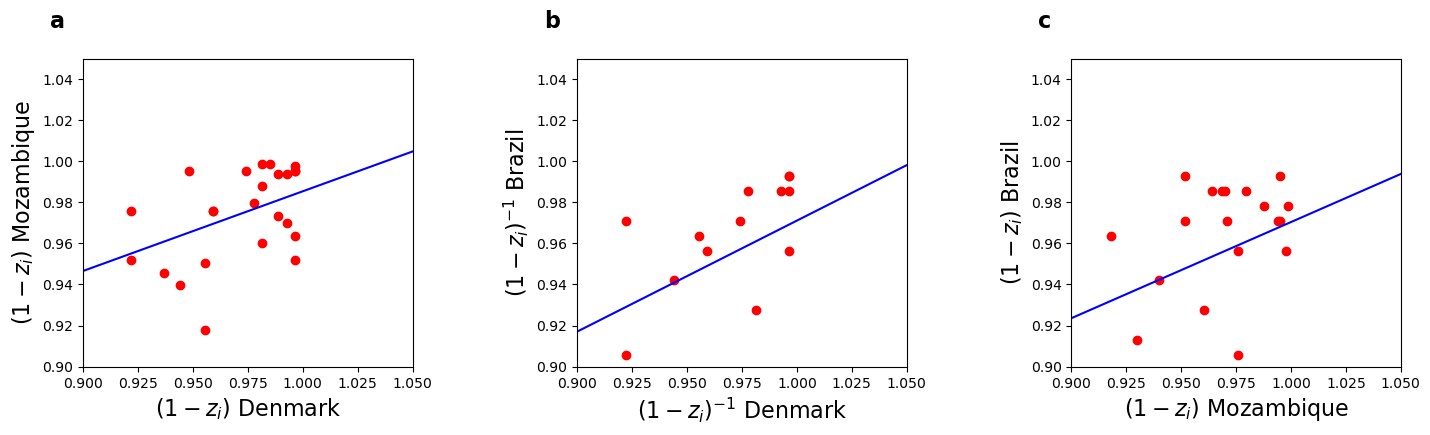}
    \caption{\textbf{Testing for invader-driven $\Lambda$ matrix in a multi-species respiratory pathogen: \textit{Streptococcus pneumoniae} accounting for slope magnitude constraints.}
We illustrate the test only for the trio Brazil, Denmark, Mozambique, involving these pairs: (Denmark, Mozambique), (Denmark, Brazil), and (Mozambique, Brazil). The $p$-values for the slopes respectively are: a) $p=0.003$(**), b) $p=0.035$, and c) $p=0.211$ (n.s). Among three pairs, in the cases of (Denmark, Mozambique) and (Denmark, Brazil), their $p$-value $0.003$ and $0.035$ allows us to favour the hypothesis then there may be a linear relation between $1-z_i$ of these pair. The slopes of these linear regressions respectively are: $0.388$, $0.540$, and $0.468$. (sd: $0.117$, $0.232$, and $0.362$). The R-squared values: $0.325$, $0.280$,  and $0.081$.
}
\label{fig:2nd-round-LinearTest}
\end{figure}


To conclude the invader-driven case, the slope between (Mozambique, Brazil) should be \textit{equal} to the ratio between slopes (Denmark, Brazil) and (Denmark, Mozambique). 
\begin{align}
\frac{1-z_i^{(\text{Mozambique})}} {1-z_i^{(\text{Denmark})}}
&
=
\underbrace{\frac{Q^{(\text{Mozambique})}}{Q^{(\text{Denmark})}}}_{s_1},
\\
\frac{1-z_i^{(\text{Brazil})}}{1-z_i^{(\text{Denmark})}}
&=
\underbrace{\frac{Q^{(\text{Brazil})}}{Q^{(\text{Denmark})}}}_{s_2},
\\
\frac{1-z_i^{(\text{Brazil})}}{1-z_i^{(\text{Mozambique})}}
&
=
\underbrace{\frac{Q^{(\text{Brazil})}}{Q^{(\text{Mozambique})}}}_{=s_2/s_1?}
\end{align}

The slopes of these linear regressions, as shown in Figure \ref{fig:2nd-round-LinearTest}, respectively are: $s_1=0.388$, $s_2=0.540$, and $s_3=0.468$. Yet, we can easily verify that $s_3 \neq s_2/s_1.$
Thus, in this particular case, such test also fails to lend support to the hypothesis of invader-driven replicator dynamics underlying serotype coexistence in pneumococcus, first because slopes are not always significant, and also because their magnitudes do not satisfy proportionality constraints.

\paragraph{\textbf{Regression intercept must be zero for invader-driven hypothesis}} 
This is an obvious requirement from the invader-driven replicator \eqref{eq:test-linear}, that yet needs to be checked. When considering the site pairs in Figure \ref{fig:correlationPlots}, the $p$-values for the $z_i$ regression intercepts  (Nepal, Denmark), (Nepal, Brazil) and (Nepal, Mozambique) are $0.488$, $0.133$ and $0.131$, respectively. This confirms that these intercepts are zero in all these pairs, with the exception of (Nepal, Iran) where the intercept of the regression is significantly non-zero.

Meanwhile, in Figure \ref{fig:2nd-round-LinearTest}, the $p$-values for the regression intercepts  (Denmark, Mozambique), (Denmark, Brazil) and (Mozambique, Brazil) are $2.23 \times 10^{-5}$, $0.073$ and $0.169$, respectively, indicating rejection of the null hypothesis in 2 out of 3 pairs (Denmark, Mozambique) and (Denmark, Brazil).
\\

\section{\added{Proof of auxiliary results in Section \ref{subsec:mean-NoSpecies} for $\mathbb E[n]$ asymptotic derivation}}\label{app:proofs}
\begin{change}
For simplicity, we  use the following notation.
\begin{defn}[Notation $\mathcal O_{\mathbb P}(\alpha_N)$]\label{def:Op}
Let $(X_N)_{N\ge 1}$ be a sequence of random variables and 
$(\alpha_N)_{N\ge 1}$ a sequence of positive real numbers.  
We write
\[
X_N = \mathcal O_{\mathbb P}(\alpha_N)
\]
if there exists a constant $M>0$ such that, for every $\varepsilon>0$,
\[
\mathbb P\big(|X_N| > M\,\alpha_N\big) \xrightarrow[N\to\infty]{} 0.
\]
Equivalently, the sequence $\left\{\frac{X_n}{\alpha_N}\right\}_{N\to \infty}$ is bounded in probability.
\end{defn}

With this notation, we can now prove the Lemmas \ref{lmm:concentrated} and \ref{lmm:determinEn}. 
\begin{proof}[Proof of Lemma \ref{lmm:concentrated}]
By Lemma~\ref{lmm:PDF-Beta}, for each $1\le j\le k\le C\sqrt N$, $\lambda_j \sim \mathrm{Beta}(N+1-j,\; j)$. Hence
\[
 \mathbb E[\lambda_j] = \frac{N+1-j}{N+1}:=u_j,
\quad
\text{and}
\quad\;
\mathrm{Var}(\lambda_j)
= \frac{(N+1-j)\,j}{(N+1)^2(N+2)}
= \mathcal O(N^{-3/2}),
\]
since $j\le C\sqrt N$. Therefore,  Bienaym\'e-Tchebychev inequality yields:
\[
\lambda_j -  u_j = \mathcal O_{\mathbb P}(N^{-3/4}).
\]
Next, for $k\le C\sqrt N$,
\[
\lambda_j^{-1} - u_j^{-1}
= \mathcal O_{\mathbb P}(N^{-3/4}),
\]
so summing over $1\le j\le k$ gives
\[
\sum_{j=1}^k \lambda_j^{-1}
= \sum_{j=1}^k  u_j^{-1} + \mathcal O_{\mathbb P}(kN^{-3/4})
= \sum_{j=1}^k  u_j^{-1} + \mathcal O_{\mathbb P}(N^{-1/4}),
\]
because $k=O(\sqrt N)$.
Since $ u_j^{-1}\in[1,2]$ for all $j\le k$, we have
$\sum_{j=1}^k  u_j^{-1} = k + \mathcal O(1)$.
Writing
\[
A_k := \sum_{j=1}^k  u_j^{-1},
\qquad
\Delta_k := \sum_{j=1}^k (\lambda_j^{-1} -  u_j^{-1}),
\]
we have $A_k = k + \mathcal O(1)$ and $\Delta_k = \mathcal O_{\mathbb P}(N^{-1/4})$. Thus
\[
Q^*_k -  S_k
= (k-1)\left(\frac{1}{A_k+\Delta_k} - \frac{1}{A_k}\right)
= (k-1)\frac{-\Delta_k}{A_k(A_k+\Delta_k)}
= \mathcal O_{\mathbb P}(N^{-3/4}),
\]
Since $A_k = k + \mathcal O(1)$ and $k=O(\sqrt N)$, we have
\[
A_k(A_k+\Delta_k) = k^2 + \mathcal O_{\mathbb P}(kN^{-1/4})
= N + \mathcal O_{\mathbb P}(N^{3/4}),
\]
and therefore $Q^*_k -  S_k
= \mathcal O_{\mathbb P}(N^{-3/4})$.

\end{proof}

\begin{proof}[Proof of Lemma \ref{lmm:determinEn}]

\medskip
\noindent\textit{(i) Existence and uniqueness.}
This is a  particular case of  Proposition \ref{rmk:prop-k} with $\lambda_j=u_j$ and $Q_k^*=S_k$.

\medskip
\noindent\textit{(ii) Asymptotics.}
We now compute the asymptotic behaviour of $\widehat n$ as $N\to\infty$. Observe that $u_j^{-1} = \frac{N+1}{N+1-j}$ for all $1\leq j \leq N$,
so
\[
\sum_{j=1}^k u_j^{-1}
= (N+1)\sum_{j=1}^k \frac{1}{N+1-j}
= (N+1)\sum_{m=N+1-k}^{N} \frac{1}{m}.
\]
Let $H_n:=\sum_{m=1}^n \frac{1}{m}$ denote the $n$-th harmonic number. Then
\[
\sum_{m=N+1-k}^{N} \frac{1}{m}
= H_N - H_{N-k}
= \ln\frac{N}{N-k} + O\!\left(\frac{1}{N}\right),
\]
uniformly for $k=O(\sqrt N)$. Hence
\[
\sum_{j=1}^k u_j^{-1}
= (N+1)\left(\ln\frac{N}{N-k} + O\!\left(\frac{1}{N}\right)\right).
\]
For $k=O(\sqrt N)$ we have
\[
\ln\frac{N}{N-k}
= \ln\Bigl(1 + \frac{k}{N-k}\Bigr)
= \frac{k}{N} + \frac{k^2}{2N^2} + O\!\left(\frac{k^3}{N^3}\right),
\]
so
\[
\sum_{j=1}^k u_j^{-1}
= (N+1)\left(\frac{k}{N} + \frac{k^2}{2N^2} + O\!\left(\frac{k^3}{N^3}\right)\right)
= k + \frac{k^2}{2N} + O\!\left(\frac{k^3}{N^2}\right).
\]
Therefore
\[
S_k
= (k-1)\Bigl(\sum_{j=1}^k u_j^{-1}\Bigr)^{-1}
= (k-1)\left(k + \frac{k^2}{2N} + O\!\left(\frac{k^3}{N^2}\right)\right)^{-1}.
\]
For $k=O(\sqrt N)$ this yields
\[
S_k
= \frac{k-1}{k}\left(1 + \frac{k}{2N} + O\!\left(\frac{k^2}{N^2}\right)\right)^{-1}
= \left(1 - \frac{1}{k}\right)\left(1 - \frac{k}{2N} + O\!\left(\frac{k^2}{N^2}\right)\right),
\]
so
\[
S_k
= 1 - \frac{1}{k} - \frac{k-1}{2N} + O\!\left(\frac{k^2}{N^2}\right).
\]
On the other hand,
\[
u_k = 1 - \frac{k}{N+1}
= 1 - \frac{k}{N} + O\!\left(\frac{k}{N^2}\right),
\qquad
u_{k+1} = 1 - \frac{k+1}{N} + O\!\left(\frac{k}{N^2}\right).
\]
The inequalities
\[
u_{k+1} \le S_k \le u_k
\]
are thus equivalent, for $k=O(\sqrt N)$, to
\[
1 - \frac{k+1}{N} \;\le\; 1 - \frac{1}{k} - \frac{k-1}{2N} + O\!\left(\frac{k^2}{N^2}\right)
\;\le\; 1 - \frac{k}{N} + O\!\left(\frac{k}{N^2}\right).
\]
Subtracting $1$ and multiplying by $-N$ gives
\[
k+1 \;\ge\; \frac{N}{k} + \frac{k-1}{2} + O\!\left(\frac{k^2}{N}\right)
\;\ge\; k.
\]
Hence $k$ satisfies
\[
k^2 = 2N + O(\sqrt N),
\]
and therefore
\[
\widehat n \sim \sqrt{2N}
\quad\text{as }N\to\infty.
\]

\end{proof}

\end{change}

\end{document}